\def\p{\partial}
\def\R{\mathbb{R}}
\def\vv<#1>{\langle#1\rangle}
\def\ol{\overline}
\def\s1{{\mathbb{S}^1}}
\def\e{{\epsilon}}
\def\XXint#1#2{\setbox0=\hbox{$#1{#2}{\int}$}{#2}\kern-.5\wd0 }
\def\XXint#1#2#3{{\setbox0=\hbox{$#1{#2#3}{\int}$}
     \vcenter{\hbox{$#2#3$}}\kern-.5\wd0}}
\def\vv<#1>{{\left\langle#1\right\rangle}}
\newtheorem{thm}{Theorem}[section]
\newtheorem{lem}{Lemma}[section]
\newtheorem{cor}{Corollary}[section]
\theoremstyle{definition}
\theoremstyle{remark}
\newtheorem{rem}{Remark}[section]
\numberwithin{equation}{section}
\begin{document}
\title{Chen-Nester-Tung quasi-local energy and Wang-Yau quasi-local mass}
\author{Jian-Liang Liu$^1$}
\address{Department of Mathematics, Shantou University, Shantou, Guangdong, 515063, China}
\email{liujl@stu.edu.cn}
\thanks{$^1$Research partially supported by the China Postdoctoral Science Foundation 2016M602497 and NSFC 61601275. }
\author{Chengjie Yu$^2$}
\address{Department of Mathematics, Shantou University, Shantou, Guangdong, 515063, China}
\email{cjyu@stu.edu.cn}
\thanks{$^2$Research partially supported by a supporting project from the Department of Education of Guangdong Province with contract no. Yq2013073, and NSFC 11571215.}
\renewcommand{\subjclassname}{%
  \textup{2010} Mathematics Subject Classification}
\subjclass[2010]{Primary 53C20; Secondary 83C99}
\date{}
\keywords{Chen-Nester-Tung quasi-local energy, Wang-Yau quasi-local mass, Brown-York mass, mean curvature vector}
\begin{abstract}
In this paper, we show that the Chen-Nester-Tung (CNT) quasi-local energy with 4D isometric matching references is closely  related to the Wang-Yau (WY) quasi-local energy.  As a particular example, we compute the second variation of the CNT quasi-local energy for axially symmetric Kerr-like spacetimes with axially symmetric embeddings at the obvious critical point $(0,0)$ and find that it is a saddle critical point in most of the cases. Also, as a byproduct, we generalize a previous result about the coincidence of the CNT quasi-local energy and Brown-York mass for axially symmetric Kerr-like spacetimes by Tam and the first author \cite{LT} to general spacetimes.
\end{abstract}
\maketitle\markboth{Liu \& Yu}{CNT \& WY}
\section{Introduction}
The problem of defining the energy of gravitating systems is its localization. There is no proper local description of gravitational energy. This was shown by Noether a hundred years ago \cite{Noether}. This phenomenon is physically understood in terms of the equivalence principle, and it is simply a basic fact about the local flatness of Riemannian geometry which implies that one can always find a coordinate system such that at any selected point the connection coefficients vanish.  The connection coefficients are analogous to the gravitational force.

In the early days, many efforts were made to construct some gravitational energy density, they led to various pseudotensors. Because of the fundamental property of gravity, two ambiguities arise: (i) there are many possible expressions and (ii) they are non-covariant (i.e. coordinate dependent) \cite{CNT1}.  Later, the idea of \emph{quasi-local} was used, and there are several proposals on defining quasi-local quantities \cite{Sz}. Chang et.al found that different pseudotensor expressions are related to different boundary conditions associated with the Hamiltonian boundary expression \cite{CNC}.  One may treat gravitational energy from the Hamiltonian point of view as well as from pseudotensor. The advantage of using the Hamiltonian formalism is that the Noether conserved current is the Hamiltonian density, which is the canonical generator of a local spacetime translation \cite{CNT1}, and it implies that different conserved quantities correspond to different displacement vector fields \cite{CNT4}.

Although the Hamiltonian point of view gives a clear understanding of the conservation and symmetry, the fundamental ambiguities are still there: (i) many possible boundary conditions of the Hamiltonian boundary term (i.e.\ Hamiltonian boundary expressions) and (ii) the choice of the reference (which is related to the coordinate choice of the pseudotensor expression).

In this article, we analyze the expression favored by CNT (\cite{CNT4}, eq.(57) which is related to the fixed coframe $\vartheta^{\alpha}$ on the boundary as the boundary condition):
\begin{equation}
E(N,\Omega)=\int_{\Omega}\mathcal{H}(N)=\oint_{S}\mathfrak{B}(N),
\end{equation}
where $\Omega$ is a spacelike region with a closed 2-boundary $S$,
\begin{equation}
\mathfrak{B}(N)=\frac{1}{2\kappa}(\Delta\omega^{a}{}_{b}\wedge i_{N}\eta_{a}{}^{b}+\ol{D}_{b}N^{a}\Delta\eta_{a}{}^{b}),
\end{equation}
$\eta_{a}{}^{b}$ will be defined below. Here $\kappa=8\pi G/c^4$, $G$ is the Newtonian gravitational constant and usually we take $c=G=1$; $\Delta \alpha:=\alpha-\bar{\alpha}$ is the difference of the variables in the physical spactime and the reference spacetime.

The CNT proposal developed a manifestly 4D covariant Hamiltonian formalism which can be applied on a wide class of geometric gravity theories, including GR, and it does not necessary depend on 3+1 decomposition. It covers not only energy but also other quasi-local quantities. Here we focus only on the quasi-local energy which corresponds to the timelike vector field $N$ on $S$. The vector field $N$ is called a displacement vector field in \cite{CNT4}. It is required to be identified to a timelike Killing vector field in the reference space-time.

Let $(M^4,g)$ be a spacetime (oriented and time-oriented) which is considered as the physical spacetime and $(\ol M^4,\ol g)$ be another spacetime (oriented and time-oriented) which is considered as the reference spacetime. Let $S^2$ be a closed spacelike surface in $M$ and $N$ be a future-directed timelike vector field on $S$. We call a smooth embedding $\varphi:U\to \ol M$ of an open neighborhood $U$ of $S$ into $\ol M$ that preserves the orientation and the time orientation a reference. Then, the CNT quasi-local energy of $S$ with respect to $N$ and the reference $\varphi$ is defined as
\begin{equation}
E(S,N,\varphi)=\frac{1}{2\kappa}\int_{S}\iota^*[(\omega^a{}_{b}-\ol\omega^a{}_b)\wedge i_N\eta_{a}{}^b+\ol{D}_{b}N^{a}(\eta_{a}{}^{b}-\ol\eta_{a}{}^{b})].\label{ECNT}
\end{equation}
Here, $\omega^a{}_b$ and $\ol \omega^a{}_b$ are the connection forms of the Levi-Civita connections for $g$ and $\varphi^*\ol g$ respectively, $\iota:S\to M$ is the natural inclusion map, and
\begin{equation}
\eta_a{}^b=\frac{1}{2}\sqrt{-\det g}g^{b\beta}\epsilon_{a\beta\mu\nu}dx^\mu\wedge dx^\nu,
\end{equation}
the covariant derivative $\ol D_{a}$ and the 2-form $\ol\eta_{a}^{\ b}$ correspond to $\varphi^*\ol g$.

Usually, the reference spacetime is chosen to be the Minkowski spacetime, dS spacetime or AdS spacetime. The main difficulty for the CNT quasi-local energy comes from the choice of canonical references so that desired properties are satisfied. In this paper, we follow the strategy of choosing reference by 4D isometric matching\footnote{The first author would like to thank Dr.\ Szabados for helpful discussions on this topic when he visited NCU at 2011. At that time, Wu et.\ al began to use CNT expression investigate the spherical symmetric cases by matching the 4-coframes \cite[\S 4 p.2411]{Wu}.} mentioned in \cite{NCLS,SCLN2,SCLN3}, and analyze the critical value of quasi-local energy. This method of finding reference is to determine a coordinate transformation such that the whole 10 metric components of the physical spacetime and the reference spacetime are identical right on the quasi-local 2-surface. It can be realized based on the 2-surface isometric embedding into the reference spacetime. Epp defined the ``invariant quasilocal energy'' by considering the 2-surface isometric embedding into  the Minkowski spacetime \cite{Epp}. Wang and Yau consider 2-surface isometric embedding into Minkowski spacetime and proved the positivity of quasi-local mass by fixing the canonical gauge \cite{WaYa,W}. For a more complete survey of the topic, see \cite{Sz}.

A 4D isometric matching reference $\varphi$ is a reference satisfying
\begin{equation}
\varphi^*\ol g=g \mbox{ on $S$}.
\end{equation}
A basic problem about the existence of 4D isometric matching references arises here. By a rather standard argument using the exponential map, we can show that any isometric embedding of $S$ into $\ol M$ can be extended to a 4D isometric matching reference (see Lemma \ref{lem-ref}). When $\ol M$ is the Minkowski spactime, isometric embedding of $S$ into $\ol M$ was discussed in \cite{WaYa}, \cite{Epp}(with a further restriction) and \cite{Br,BLY,Sz} (into the light cone).

Let $\varphi$ be a 4D isometric matching reference, since $\eta_a{}^b$ depnds only on the metric, so the second term in \eqref{ECNT} vanishes and the CNT quasi-local energy \eqref{ECNT} with respect to $\varphi$ becomes\footnote{In the coordinate system such that the reference connection vanishes, it is reduced to the Freud superpotential \cite{Freud}.}
\begin{equation}\label{eqn-CNT-E-1}
E(S,N,\varphi)=\frac{1}{8\pi}\int_S \iota^*[(\omega^a{}_b-\ol\omega^a{}_b)\wedge i_N\eta_{a}{}^b].
\end{equation}
Although a 4D isometric matching reference must be defined in a neighborhood of $S$ by definition, it is not hard to see that the CNT quasi-local energy depends only on the 1-jet of a 4D isometric matching reference $\varphi$ on $S$ (see Lemma \ref{lem-1-jet}). That is to say, if $\varphi_1$ and $\varphi_2$ are two 4D isometric matching references such that $\varphi_1=\varphi_2$ and $d\varphi_1=d\varphi_2$ on $S$, then
\begin{equation}
E(S,N,\varphi_1)=E(S,N,\varphi_2).
\end{equation}

By the fact that CNT quasi-local energy depends only on the 1-jet of the 4D isometric matching reference and the extension of an isometric embedding to a 4D isometric matching reference (see Lemma \ref{lem-ref}), we can simply consider a 4D isometric matching reference as a pair $(\varphi, \psi)$ where $\varphi:S\to \ol M$ is an isometric embedding and $\psi:T_{S}^\perp{M}\to \varphi^*T_{\ol S}^\perp\ol M$ is a linear isometry of vector bundles. Here $T_S^{\perp}M$ and $T_{\ol S}^\perp \ol M$ mean the normal bundle of $S$ and $\ol S$ respectively. $\varphi$ can be considered as the embedding freedom and $\psi$ as the boost freedom of the observer in \cite{LT}.

In this paper, we find that the CNT quasi-local energy with respect to a 4D isometric matching reference is closely related to the WY mass. Indeed, a saddle critical value of the CNT quasi-local energy turns out to agree with the WY energy. As a byproduct, we generalize a previous result of Tam and the first author \cite{LT} to a much more general setting. Our result is as follows:
\begin{thm}\label{thm-CNT-BY}
Let $(M^4,g)$ and $(\ol M^4,\ol g)$ be two oriented and time-oriented spacetimes, $S^2$ be an oriented closed spacelike surface in $M^4$ and $N$ be a  future-directed timelike vector field on $S$. Let $\varphi$ be a 4D isometric matching reference. Then,
\begin{equation}\label{eqn-CNT}
\begin{split}
&E(S,N,\varphi)\\
=&\frac{1}{8\pi}\int_{\ol S}\left(-\|\ol N^\perp\|\vv<\ol H,\ol X>+\frac{1}{\|\ol N^\perp\|}\vv<\ol\nabla_{\ol N^\top}\ol X,\ol N^\perp>\right)dV_{\ol S}\\
&-\frac{1}{8\pi}\int_S\left(-\|N^\perp\|\vv<H,X>+\frac{1}{\|N^\perp\|}\vv<\nabla_{N^{\top}}X,N^\perp>\right)dV_S
\end{split}
\end{equation}
where $H$ and $\ol H$ are the mean curvature vectors of $S$ and $\ol S=\varphi(S)$ in $M$ and $\ol M$ respectively. Here $N^\perp$ is the orthogonal projection of $N$ into $T_S^\perp M$, $N^\top=N-N^\perp$, and $X$ is the unit vector field on $S$ such that $X\in T_S^\perp M$ and $X\perp N$. We also assume that the composition of $N^\perp, X$ and the orientation of $S$ is the same as the orientation of $M$. Moreover, $\ol X=\varphi_*X$, $\ol N^\perp=\varphi_*N^\perp$ and $\ol N^\top=\varphi_*N^\top$. In particular, if $N$ is orthogonal to $S$, then
\begin{equation}
E(S,N,\varphi)=\frac{1}{8\pi}\int_{S}\|N\|\left(-\vv<\ol H,\ol X>+\vv<H,X>\right)dV_S.
\end{equation}
\end{thm}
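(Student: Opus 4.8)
The plan is to unwind the differential-form expression \eqref{eqn-CNT-E-1} into Riemannian-geometric quantities attached to the surface $S$ and its embedding, and then recognize the resulting integrand as the combination appearing on the right-hand side of \eqref{eqn-CNT}. Since the CNT energy is the difference of a ``physical'' term and a ``reference'' term of exactly the same analytic shape (with bars removed), it suffices to analyze a single term: I would set up an adapted orthonormal coframe $\{\vartheta^0,\vartheta^1,\vartheta^2,\vartheta^3\}$ along $S$ in which $\vartheta^0,\vartheta^1$ span the conormal directions (with $e_0,e_1$ an orthonormal frame of $T_S^\perp M$ chosen so that $e_1=X$ and $e_0$ lies in the plane of $N^\perp$, consistent with the stated orientation convention) and $\vartheta^2,\vartheta^3$ are tangent to $S$. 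In such a frame the 2-forms $\eta_a{}^b$ and the connection 1-forms $\omega^a{}_b$ restricted to $S$ have a very limited number of surviving components: $\iota^*\eta_a{}^b$ picks out only the area form $dV_S$ paired with the normal indices, and the relevant $\omega^a{}_b$ components restricted to $S$ encode precisely the second fundamental form (hence the mean curvature vector $H$) together with the normal connection 1-form (the ``twist'' $\vv<\nabla_{\cdot}e_0,e_1>$).

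The key computational steps, in order, are: (1) compute $i_N\eta_a{}^b$ for $N = \|N^\perp\| e_0 + N^\top$, splitting into the $N^\perp$-part and the $N^\top$-part; the $N^\perp$-part will contribute a term proportional to $\|N^\perp\|$ times a tangential 2-form, and the $N^\top$-part contributes a 1-form that must be wedged against the appropriate $\omega^a{}_b$. (2) Restrict $\omega^a{}_b$ to $S$: the components $\omega^{i}{}_{\alpha}$ (normal index $i\in\{0,1\}$, tangential index $\alpha\in\{2,3\}$) give the shape operators $A_{e_i}$, whose traces are the components of $H$ against $e_i$; the component $\omega^0{}_1$ restricted to $S$ is the normal connection form whose value on $N^\top$ is $\vv<\nabla_{N^\top}e_0, e_1>$. (3) Assemble $\iota^*[(\omega^a{}_b)\wedge i_N\eta_a{}^b]$ and integrate: the $\|N^\perp\|\vv<H, e_1>$ term appears with the correct sign from the shape-operator trace (using $X = e_1$ and $\vv<H,X>$), while the $N^\top$ term produces $\frac{1}{\|N^\perp\|}\vv<\nabla_{N^\top}X, N^\perp>$ after rewriting $\vv<\nabla_{N^\top}e_0,e_1>$ in terms of $X=e_1$ and $N^\perp = \|N^\perp\| e_0$ (note $\vv<\nabla_{N^\top}X, e_0> = -\vv<\nabla_{N^\top}e_0, X>$ by metric compatibility, which supplies the sign and the $1/\|N^\perp\|$ normalization). (4) Do the identical computation for the reference term using $\varphi^*\ol g$, $\ol\omega^a{}_b$, $\ol D$, and the push-forward quantities $\ol X, \ol N^\perp, \ol N^\top$; since $\varphi$ is a 4D isometric matching reference these are genuinely the intrinsic/extrinsic data of $\ol S\subset\ol M$. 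Subtracting gives \eqref{eqn-CNT}. The special case $N^\top = 0$ is then immediate: the twist term drops, $\|N^\perp\| = \|N\|$, and only $-\|N\|\vv<H,X>$ survives in each piece, but note the overall sign in \eqref{eqn-CNT} flips the reference minus physical ordering for the last displayed formula, giving $\frac{1}{8\pi}\int_S \|N\|(-\vv<\ol H,\ol X> + \vv<H,X>)dV_S$ (using that $\varphi$ is isometric so $dV_{\ol S} = dV_S$ under $\varphi$).

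The main obstacle I anticipate is \emph{bookkeeping of signs and orientations}: the CNT 2-form $\eta_a{}^b$ carries a $\sqrt{-\det g}$ and the Levi-Civita symbol $\epsilon_{a\beta\mu\nu}$ with a Lorentzian signature, so the interior product $i_N$ and the subsequent wedge with $\omega^a{}_b$ must be tracked against the fixed orientation conventions stated in the theorem (``the composition of $N^\perp, X$ and the orientation of $S$ agrees with the orientation of $M$''). Getting the relative sign between the mean-curvature term and the twist term right — and making sure the final answer is reference-minus-physical as written rather than its negative — will require careful attention, and is exactly the point where an index computation can silently go wrong. A secondary, milder technical point is justifying that only the 1-jet of $\varphi$ enters and that push-forwards $\ol X = \varphi_* X$ etc.\ really are the normal-bundle data of $\ol S$; this is handled by Lemma \ref{lem-1-jet} and Lemma \ref{lem-ref} together with the isometric matching condition $\varphi^*\ol g = g$ on $S$, which guarantees that $\varphi_*$ maps $T_S^\perp M$ isometrically onto $T_{\ol S}^\perp \ol M$.
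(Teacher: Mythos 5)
Your proposal is correct and follows essentially the same route as the paper's proof: an adapted orthonormal frame with $e_0=N^\perp/\|N^\perp\|$, $e_1=X$, the splitting of $\iota^*i_N\eta_a{}^b$ into the $N^\perp$- and $N^\top$-parts, the identification of the surviving connection components with $\vv<H,X>$ (trace of the shape operator) and with the normal connection form $\omega^0{}_1$ evaluated on $N^\top$, and the parallel computation for $\varphi^*\ol g$ using the isometric matching condition. The sign/orientation bookkeeping you flag is indeed where the paper's explicit $\epsilon$-symbol computation does the work, but your outline contains all the needed ingredients.
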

\begin{rem}
The right hand side of \eqref{eqn-CNT} is similar to equation (6) in Wang and Yau \cite{WYPRL}.
\end{rem}
For $S$ enclosing a spacelike domain $\Omega$ with $S$ embedded into $\R^3$, the Brown-York mass \cite{BY} was defined as
\begin{equation}
\mathfrak{m}_{\mathrm{BY}}(\Omega)=\frac{1}{8\pi}\int_{S}(k_0-k)dV_S,
\end{equation}
where $k$ and $k_0$ are the mean curvature (with respect to unit outward normal) of $S$ and the embedding of $S$ respectively. Comparing this to the result above, we have the following corollary:
\begin{cor}\label{cor-CNT-BY}
Let $M,S,N$ be the same as above and $(\ol M,\ol g)$ be the Minkowski spacetime. Moreover, suppose that $S$ encloses a space-like domain $\Omega$ and $N$ is also orthogonal to $\Omega$. Then, for any 4D isometric matching reference $\varphi$ such that
\begin{enumerate}
\item $\varphi_*N=\frac{\p}{\p T}$ and
\item  $\varphi(S)\subset\R^3$,
\end{enumerate}
we have $E(S,N,\varphi)=\mathfrak{m}_{\mathrm{BY}}(\Omega)$. Here, the natural coordinate of $\ol M$ is written as $(T,X,Y,Z)$.
\end{cor}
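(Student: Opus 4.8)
The plan is to deduce the corollary directly from Theorem~\ref{thm-CNT-BY} by taking the Minkowski reference and reinterpreting the mean-curvature-vector pairings there as scalar mean curvatures of the Riemannian surfaces $S\subset\Omega$ and $\ol S\subset\R^3$.

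First, since $N\perp\Omega$ and $T_pS\subset T_p\Omega$ for $p\in S$, the field $N$ is orthogonal to $S$, so the ``in particular'' case of Theorem~\ref{thm-CNT-BY} applies:
\[
E(S,N,\varphi)=\frac{1}{8\pi}\int_S\|N\|\big(-\vv<\ol H,\ol X>+\vv<H,X>\big)\,dV_S .
\]
Next I would note that the two hypotheses fix the reference geometry: as $\varphi$ is a $4$D isometric matching reference, $g(N,N)=\ol g(\varphi_*N,\varphi_*N)=\ol g(\partial/\partial T,\partial/\partial T)=-1$, so $\|N\|\equiv1$ on $S$, consistent with $\mathfrak{m}_{\mathrm{BY}}$ carrying no lapse factor; and since $\partial/\partial T=\varphi_*N$ is the unit future normal of the totally geodesic slice $\R^3$ containing $\ol S=\varphi(S)$, the vector $\ol X=\varphi_*X$, being orthogonal both to $\partial/\partial T$ and to $T\ol S$, is a unit normal of $\ol S$ \emph{within} $\R^3$.

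The heart of the proof is the identification of the two pairings with Brown--York-type mean curvatures. Because $N\perp\Omega$, the second fundamental form of the hypersurface $\Omega\subset M$ is valued in $\R N$, so for $Y,Z$ tangent to $S$ the Gauss formula gives $\vv<\nabla_Y Z,X>=\vv<\nabla^\Omega_Y Z,X>$; tracing over an orthonormal frame of $S$ yields that $\vv<H,X>$ equals the $X$-component of the mean curvature vector of $S$ regarded as a surface in the Riemannian manifold $\Omega$. Likewise, since $\R^3$ is totally geodesic in the Minkowski space $\ol M$, $\vv<\ol H,\ol X>$ is the $\ol X$-component of the mean curvature vector of $\ol S\subset\R^3$. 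Using the orientation normalization built into Theorem~\ref{thm-CNT-BY} (the composition of $N^\perp$, $X$ and the orientation of $S$ equals that of $M$) one checks that $X$ is the \emph{outward} unit normal of $S$ in $\Omega$, and, $\varphi$ being orientation preserving, $\ol X$ is the outward unit normal of $\ol S$ in the domain it bounds in $\R^3$ (such a domain existing because $\ol S$ is embedded, by Jordan--Brouwer). With the paper's sign conventions (so that the mean curvature vector of a round sphere points inward) this gives $\vv<H,X>=-k$ and $\vv<\ol H,\ol X>=-k_0$; since $\varphi\colon S\to\ol S$ is an isometry, $dV_{\ol S}=dV_S$, and hence
\[
E(S,N,\varphi)=\frac{1}{8\pi}\int_S(k_0-k)\,dV_S=\mathfrak{m}_{\mathrm{BY}}(\Omega).
\]

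I expect the only genuine difficulty to be the sign and orientation bookkeeping: one must verify that the abstract orientation condition of Theorem~\ref{thm-CNT-BY} forces $X$, and therefore $\ol X$, to be the outward rather than the inward normal, since with the inward choice the two pairings become $+k$ and $+k_0$ and the identity acquires the wrong overall sign. A lesser point is to confirm that $\ol S$ genuinely bounds a region of $\R^3$ so that ``$k_0$ with respect to the outward normal'' is meaningful, which is immediate once $\varphi$ is an embedding.
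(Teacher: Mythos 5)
Your proposal is correct and follows exactly the route the paper intends: the corollary is stated as an immediate consequence of the ``in particular'' case of Theorem~\ref{thm-CNT-BY}, with $\|N\|=1$ forced by the $4$D isometric matching condition and $\vv<H,X>=-k$, $\vv<\ol H,\ol X>=-k_0$ for the outward normals. You in fact supply more detail than the paper does (the Gauss-formula reduction to the mean curvatures in $\Omega$ and $\R^3$, and the orientation bookkeeping, which the paper disposes of by the convention stated in the remark following the corollary).
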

\begin{rem}
The orientation of $S$ is chosen so that $N$, the outward normal of $S$ and the orientation of $S$ form the orientation of $M$.
\end{rem}

Recall that the dS spacetime and AdS spacetime are $\R\times \hat M$ with $\hat M$ be the sphere and hyperbolic space respectively, equipped with the Lorentz metric:
\begin{equation}
g=-V^2dT^2+\hat g
\end{equation}
where $\hat g$ is the standard metric on $\hat M$ and $V$ is the static potential on $\hat M$ (see \cite{CH}). Note that $\frac{\p}{\p T}$ is a future directed time-like Killing vector field on the dS spacetime and AdS spacetime with length $|V|$. So, we have the following corollary when the reference is chosen to be the dS spacetime  or AdS spacetime corresponding to Corollary \ref{cor-CNT-BY} where the reference is chosen to be the Minkowski spacetime.

\begin{cor}\label{cor-CNT-BY_dSAdS}
Let $M,S,N$ be the same as above and $(\ol M,\ol g)$ be the dS or AdS spacetime. Moreover, suppose that $S$ encloses a space-like domain $\Omega$ and $N$ is also orthogonal to $\Omega$. Then, for any 4D isometric matching reference $\varphi$ such that
\begin{enumerate}
\item $\varphi_*N=\frac{\p}{\p T}$ and
\item  $\varphi(S)\subset \hat M$ ,
\end{enumerate}
we have
\begin{equation}\label{eqn-CNT-ds-ads}
E(S,N,\varphi)=\frac{1}{8\pi}\int_{S}|V|(k_0-k)dV_S.
\end{equation}
\end{cor}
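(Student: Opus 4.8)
The plan is to deduce this directly from Theorem~\ref{thm-CNT-BY}, following the same route as Corollary~\ref{cor-CNT-BY} but with the slice $\hat M\subset\ol M=\R\times\hat M$ playing the role that $\R^3\subset\ol M$ plays there. First I would observe that since $N$ is orthogonal to $\Om$ and $S=\p\Om$, the vector $N$ is normal to $S$, so $N^\perp=N$, $N^\top=0$, $\ol N^\top=\varphi_*N^\top=0$, and the ``in particular'' clause of Theorem~\ref{thm-CNT-BY} applies, giving
\[
E(S,N,\varphi)=\frac{1}{8\pi}\int_S\|N\|\left(-\vv<\ol H,\ol X>+\vv<H,X>\right)dV_S .
\]
Thus the whole problem reduces to identifying the three scalars $\|N\|$, $\vv<H,X>$ and $\vv<\ol H,\ol X>$ on $S$.

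For the physical side, $X$ is by definition the unit spacelike normal of $S$ in $M$ orthogonal to $N$, chosen outward by the stated orientation convention; since $N\perp\Om$, $X$ is tangent to $\Om$ and is the outward unit normal of $S$ inside $\Om$. Using the Gauss formula for the hypersurface $\Om\subset M$---whose second fundamental form is valued in the line spanned by $N$, hence perpendicular to $X$---one gets, for a local orthonormal frame $\{e_1,e_2\}$ of $TS$, that $\vv<\nabla_{e_i}e_i,X>=\vv<\nabla^\Om_{e_i}e_i,X>$, and summing over $i$ yields $\vv<H,X>=-k$, with $k$ the mean curvature of $S$ in $\Om$ with respect to $X$. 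This is exactly the step that produces the Brown--York integrand, and it is identical to the corresponding step in Corollary~\ref{cor-CNT-BY}.

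For the reference side, I would use that $\ol g=-V^2dT^2+\hat g$, so the slice $\hat M=\{T=\mathrm{const}\}$ is a hypersurface of $\ol M$ with normal direction spanned by $\frac{\p}{\p T}$ and with $\|\frac{\p}{\p T}\|_{\ol g}=|V|$. Since $\varphi$ is a $4$D isometric matching reference, $\varphi^*\ol g=g$ on $S$; together with hypothesis~(1) this forces $\ol N^\perp=\varphi_*N=\frac{\p}{\p T}$ and hence $\|N\|=\|\ol N^\perp\|=\varphi^*|V|$ along $S$, and together with hypothesis~(2) it forces $\ol X=\varphi_*X$ to be a unit spacelike normal of $\ol S=\varphi(S)$ in $\ol M$ that is tangent to $\hat M$ (since $\ol X\perp\ol N^\perp=\frac{\p}{\p T}$) and outward. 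Repeating the Gauss-formula argument verbatim for $\hat M\subset\ol M$ and $\ol S\subset\hat M$ gives $\vv<\ol H,\ol X>=-k_0$, where $k_0$ is the mean curvature of $\ol S$ in $\hat M$ with respect to $\ol X$, transported back to $S$ by $\varphi$. Substituting $\|N\|=|V|$, $\vv<H,X>=-k$ and $\vv<\ol H,\ol X>=-k_0$ into the displayed formula gives $-\vv<\ol H,\ol X>+\vv<H,X>=k_0-k$, which is precisely \eqref{eqn-CNT-ds-ads}.

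There is no real obstacle here: the corollary is essentially the dS/AdS translate of Corollary~\ref{cor-CNT-BY}, the new inputs being only that $\frac{\p}{\p T}$ has length $|V|$ rather than $1$ and that $\hat M$ replaces $\R^3$. The only points requiring care are the sign and orientation conventions---making sure $X$ and $\ol X$ are genuinely the outward normals so that $\vv<H,X>=-k$ and $\vv<\ol H,\ol X>=-k_0$ (rather than $+k$, $+k_0$)---and the elementary remark that for the metric $-V^2dT^2+\hat g$ the slice $\{T=\mathrm{const}\}$ has normal direction exactly $\frac{\p}{\p T}$; both are routine.
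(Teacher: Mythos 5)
Your proposal is correct and follows exactly the route the paper intends: the corollary is stated as the dS/AdS analogue of Corollary \ref{cor-CNT-BY}, deduced from the ``in particular'' clause of Theorem \ref{thm-CNT-BY} together with the observations that $\frac{\p}{\p T}$ has length $|V|$ and that the Gauss-formula reduction identifies $\vv<H,X>=-k$ and $\vv<\ol H,\ol X>=-k_0$. The paper gives no separate argument beyond this, so there is nothing to add.
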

Note that the expression \eqref{eqn-CNT-ds-ads} was first studied in \cite{ST,WY} which gives a substitution for Brown-York mass with dS or AdS references. Quasi-local energy with dS and AdS references was also studied in \cite{CWY2}.

Let $\varphi_0:S\to \R^{1,3}$ be an isometric embedding and $\tau$ be the time component of the embedding and suppose the mean curvature vector $H$ of $S$ in $M$ is spacelike. Recall that the Wang-Yau \cite{WaYa} quasi-local energy $E_{\mathrm{WY}}(S,\tau)$ is defined as
\begin{equation}\label{eqn-WY}
\begin{split}
E_{\mathrm{WY}}(S,\tau)=&\frac{1}{8\pi}\int_{\ol S}\left(-\sqrt{1+\|\nabla\tau\|^2}\vv<\ol H,\ol e_1>+\vv<\ol \nabla_{-\nabla\tau}\ol e_1,\ol e_0>\right)dV_{\ol S}\\
&-\frac{1}{8\pi}\int_{S}\left(-\sqrt{1+\|\nabla\tau\|^2}\vv<H, e_1>+\vv< \nabla_{-\nabla\tau}e_1,e_0>\right)dV_S.\\
\end{split}
\end{equation}
Here $e_0$ is a future-directed time-like vector such that
\begin{equation}\label{eqn-e-0}
\vv<H,e_0>=-\frac{\Delta\tau}{\sqrt{1+\|\nabla\tau\|^2}},
\end{equation}
$e_1$ is orthogonal to $e_0$ and $S$, and pointed outside if $S$ encloses a domain $\Omega$. $\ol e_1$ is pointing outside and orthogonal to $\ol S$ and $\frac{\p}{\p T}$. $\ol e_0$ is a future-directed time-like vector that is orthogonal to $\ol S$ and $\ol e_1$. Then, the WY mass of $S$ is defined as \begin{equation}
\mathfrak{m}_{\mathrm{WY}}(S)=\inf_{\mbox{$\tau$ admissible}}E_{\mathrm{WY}}(S,\tau).
\end{equation}
Here $\nabla \tau$ and $\Delta\tau$ mean the gradient and Laplacian of $\tau$ with respect to the induced metric on $S$. For the definition of admissible, see \cite{WaYa}.

Let
\begin{equation}\label{eqn-N-0-1}
N_0=\sqrt{1+\|\nabla \tau\|^2}e_0-\nabla\tau
\end{equation}
and
$\varphi$ be a 4D isometric matching extension of $\varphi_0$ (see Lemma \ref{lem-ref}) such that
\begin{equation}\label{eqn-N-0}
\varphi_*N_0=\frac{\p}{\p T}.
\end{equation}
By comparing \eqref{eqn-CNT} with \eqref{eqn-WY}, one can see that
\begin{equation} \label{eqn-CNT-WY}
E(S,N_0,\varphi)=E_{\mathrm{WY}}(S,\tau).
\end{equation}
Moreover, by the computation in \cite[p.925]{WaYa},
\begin{equation}
E(S,N_0,\varphi)=\max_{\psi_*N=\frac{\p}{\p T},\psi|_S=\varphi_0 }E(S,N,\psi).
\end{equation}
This means that the WY quasi-local mass value can be obtained from a min-max procedure from the CNT quasi-local energy. Therefore, if the WY quasi-local mass is achieved by an isometric embedding $\varphi_0$, then the corresponding pair $(N_0,\varphi)$ is actually a saddle critical point of the CNT quasi-local energy in the space of references:
\begin{equation*}
\mathcal R_0=\{(N,\psi)\ |\ \mbox{$\psi$: a 4D isometric matching reference with $\psi_*N=\frac{\p}{\p T}$} \}.
\end{equation*}

Consider the physical spacetime $(M,g)$ being axially symmetric and Kerr-like:
\begin{equation}\label{eqn-kerr-like}
g=Fdt^2+2Gdtd\phi+Hd\phi^2+R^2dr^2+\Sigma^2d\theta^2,
\end{equation}
where the components $F,G,H,R,\Sigma$ are functions of $r$, $\theta$ only. Let
$$\Omega=\{(t,r,\theta,\phi)\ |\ t=t_0,r\leq r_0 \}$$ and  $$S=\{(t,r,\theta,\phi)\ |\ t=t_0,r=r_0 \}.$$
Suppose that the 4D isometric matching reference $\varphi$ is axially symmetric:
\begin{equation}\label{eqn-axis-embedding}
\left\{\begin{array}{l}T=T(t,r,\theta)\\
X=\rho(t,r,\theta)\cos(\phi+\Phi(t,r,\theta))\\
Y=\rho(t,r,\theta)\sin(\phi+\Phi(t,r,\theta))\\
Z=Z(t,r,\theta).
\end{array}\right.
\end{equation}
Then, the 4D isometric matching equation is indeed explicitly solvable (See \cite{SCLN2}). Set
\begin{equation}\label{eqn-x-y}
\left\{\begin{array}{l}x(\theta)=T_r(t_0,r_0,\theta)\\
y(\theta)=T_\theta(t_0,r_0,\theta)
\end{array}\right.
\end{equation}
and
\begin{equation}
N=(\varphi^{-1})_*\frac{\p}{\p T}.
\end{equation}
Then,
\begin{equation}\label{eqn-CNT-x-y}
E(x,y):=E(S,N, \varphi)=\frac{1}{4}\int_0^{\pi}\mathfrak{B}(x,y)d\theta
\end{equation}
with
\begin{equation}\label{eqn-CNT-B}
\begin{split}
\mathfrak{B}(x,y)=&-\frac{\alpha(H\Sigma^2)_r}{2\sqrt H R^2\Sigma^2}-\sqrt H\left(\frac{H_{\theta\theta}-2l}{\beta}+\frac{R_\theta xy}{R\alpha}-\frac{xy^3\beta+H_\theta\alpha\Sigma^2}{l\alpha\beta\Sigma}\Sigma_\theta\right)\\
&+\frac{\sqrt Hyx_\theta}{\alpha}+\frac{\sqrt Hy(H_\theta\alpha-xy\beta)}{l\alpha\beta}y_\theta,
\end{split}
\end{equation}
where
\begin{equation}\label{eqn-a-b-l}
\left\{\begin{array}{l}\alpha=\sqrt{x^2\Sigma^2+R^2l}\\
\beta=\sqrt{-H_\theta^2+4Hl}\\
l=y^2+\Sigma^2.
\end{array}\right.
\end{equation}
Here we call $x$ the boost freedom and $y$ the embedding freedom. Note that
$$H(0)=H(\pi)=G(0)=G(\pi)=0$$
and
$$x'(0)=x'(\pi)=y(0)=y(\pi)=0$$
by the smoothness of the metric $g$ and the function $T$ (as a smooth function on $S$) (See \eqref{eqn-kerr-like} and \eqref{eqn-axis-embedding}).

The Euler-Lagrange equation of $E(x,y)$ is
\begin{equation}\label{eqn-critical}
\left\{
\begin{split}
y_\theta=&-\frac{(\Sigma^2H)_r}{2HR^2}x-\frac{\Sigma H_\theta-2H\Sigma_\theta}{2H\Sigma}y\\
x_\theta=&\frac{R_\theta}{R}x+\left(\frac{(\Sigma^2H)_r}{2H\Sigma^2}-\frac{\alpha\beta+xyH_\theta}{2Hl}\right)y.
\end{split}\right.
\end{equation}
\eqref{eqn-critical} has an obvious solution $x\equiv y\equiv 0$. In \cite{LT}, Tam and the first author considered the solutions of \eqref{eqn-critical} in the case of the Minkowski, Schwarschild and Kerr spacetimes. They also compared the CNT quasi-local energy with the Brown-York mass when $x=y=0$. Indeed, they showed that
\begin{equation}
E(0,0)=\mathfrak{m}_{\mathrm{BY}}(\Omega).
\end{equation}
It is not difficult to see that this is a special case of Corollary \ref{cor-CNT-BY}.

Motivated by the relation of WY mass and CNT quasi-local energy, we compute the second variation of $E(x,y)$ at the obvious critical point $x=y=0$ and find that, for most of the cases, $(x,y)=(0,0)$ is a saddle point (see Theorem \ref{thm-saddle} ). In particular, this is true for the Minkowski, Schwarschild and Kerr spacetimes.

By direct computation (see the Appendix), the first equation of \eqref{eqn-critical} corresponds to
\begin{equation}
N_0=(\varphi^{-1})_*\frac{\p}{\p T}.
\end{equation}
Let
\begin{equation}
E(y):=E(x,y)
\end{equation}
with $x$ decided by $y$ from the first equation of \eqref{eqn-critical}. Then, by the uniqueness of isometric embedding into $\R^3$ and  the relation \eqref{eqn-CNT-WY}, we know that
\begin{equation}
E(y)=E_{\mathrm{WY}}(S, \tau),
\end{equation}
where $\tau$ depends only on $\theta$ and $y=\frac{d\tau}{d\theta}$.

In \cite{CWY,MT,MTX}, the authors considered minimizing of properties of the critical points of the WY quasi-local energy. By their results, it is clear that $y=0$ is a local minimum of $E(y)$ for the Schwarschild spacetime when $r>2m$ and for the Kerr spacetime when $r$ is large enough. This implies that $(0,0)$ is a saddle critical point of $E(x,y)$ for the Schwarschild spacetime when $r>2m$ and for the Kerr spacetime when $r$ is large enough.

Furthermore, in \cite{CWY}, under some curvature assumptions, when the induced metric on $S$ is axially symmetric,  Chen, Wang and Yau showed that if $\tau=0$ is a critical point of the WY quasi-local energy, then $\tau=0$ is a global minimum among all axially symmetric $\tau$. This implies that
\begin{equation}
E(y)\geq E(0,0)
\end{equation}
for the Schwarzschild spacetime when $r>2m$ and for the Kerr spacetime when $r$ is large enough. It is very likely that
\begin{equation}
\begin{split}
\mathfrak{m}_{\mathrm{WY}}(S)&=E(0,0)\\
&=\mathfrak{m}_{\mathrm{BY}}(S)\\
&=\frac{1}{4}\int_0^\pi\left(-\frac{(\sqrt H\Sigma)_r}{ R}+\frac{\Sigma\left(1-\frac{H_{\theta\theta}}{2\Sigma^2}\right)}{\sqrt{1-\frac{H_\theta^2}{4H\Sigma^2}}}+\frac{H_\theta\Sigma_\theta}{2\Sigma^2\sqrt{1-\frac{H_\theta^2}{4H\Sigma^2}}}\right)d\theta
\end{split}
\end{equation}
for the Kerr spacetime.

This paper is organized as follows. In Section 2, we prove  Theorem \ref{thm-CNT-BY}. In Section 3, we compute the second variation of $E(x,y)$ at the obvious critical point $(x,y)=(0,0)$ and show that it is a saddle point for most of the cases including the Minkowski, Schwarschild  and Kerr spacetimes.

{\bf Acknowledgements.} The authors would like to thank Professor Nester for carefully reading the manuscript of this paper, many helpful suggestions and sharing ideas, Professor Tam for helpful suggestions, and Professor Mu-Tao Wang for his comments that help to clarify our understanding.
\section{CNT quasi-local energy and Wang-Yau mass}
We first prove that any isometric embedding can be extended to a 4D isometric reference. The argument is rather standard and it may be trivial for experts. However, for completeness of the paper, we also include the proof.
\begin{lem}\label{lem-ref}
Let $(M^4,g)$ and $(\ol M^4,\ol g)$ be two oriented and time-oriented spacetimes, $S^2$ be an oriented closed spacelike surface in $M$, and $\varphi_0:S\to \ol M$ be an isometric embedding. Moreover, let $N$ and $\ol N$ be unit future-directed timelike normal vector fields on $S$ and $\ol S:=\varphi_0(S)$ respectively, and $X$ and $\ol X$ be unit normal vector fields on $S$ and $\ol S$ that are also orthogonal to $N$ and $\ol N$ respectively. We also assume that the compositions of $N,X$, orientation of $S$ and $\ol N$, $\ol X$, orientation of $\ol S$ (induced from $S$ by $\varphi_0$) are the same as the orientations of $M$ and $\ol M$ respectively. Then, there is a 4D isometric matching reference $\varphi$ such that
\begin{enumerate}
\item $\varphi|_{S}=\varphi_0$;
\item $\varphi_* N=\ol N$ on $\ol S$;
\item $\varphi_* X=\ol X$ on $\ol S$.
\end{enumerate}
\end{lem}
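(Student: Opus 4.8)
The plan is to construct $\varphi$ by transporting a tubular neighbourhood of $S$ onto a tubular neighbourhood of $\ol S$ through the normal exponential maps, using the given orthonormal normal frames to identify the two normal bundles.

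First I would assemble an isometric bundle map covering $\varphi_0$. Since $(N,X)$ is an orthonormal frame of the rank-$2$ bundle $T_S^\perp M$ (with $N$ future timelike and $X$ spacelike) and $(\ol N,\ol X)$ is an orthonormal frame of $T_{\ol S}^\perp\ol M$ of the same type, the assignment $N\mapsto\ol N$, $X\mapsto\ol X$ defines a smooth linear isometry $\psi:T_S^\perp M\to T_{\ol S}^\perp\ol M$ covering $\varphi_0$. Combining $\psi$ with $d\varphi_0:TS\to T\ol S$ through the orthogonal splittings $TM|_S=TS\oplus T_S^\perp M$ and $T\ol M|_{\ol S}=T\ol S\oplus T_{\ol S}^\perp\ol M$ produces a bundle isometry $\Psi:TM|_S\to T\ol M|_{\ol S}$ covering $\varphi_0$; by the compatibility of orientations and time orientations assumed in the statement, $\Psi$ preserves orientation and time orientation on each fibre.

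Next I would invoke the tubular neighbourhood theorem twice. Since $S$ is a compact spacelike surface, the normal exponential map of $M$ is a diffeomorphism $E$ from a neighbourhood $\mathcal U$ of the zero section of $T_S^\perp M$ onto an open neighbourhood $U$ of $S$, and likewise the normal exponential map of $\ol M$ is a diffeomorphism $\ol E$ from a neighbourhood of the zero section of $T_{\ol S}^\perp\ol M$ onto a neighbourhood of $\ol S$. After shrinking $\mathcal U$ so that $\psi(\mathcal U)$ lies in the domain of $\ol E$, set
\[
\varphi:=\ol E\circ\psi\circ E^{-1}:U\to\ol M .
\]
Then $\varphi|_S=\varphi_0$, because $E$ and $\ol E$ restrict to the canonical inclusion on the zero section. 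To identify $d\varphi$ along $S$: on $T_pS$ it equals $d(\varphi_0)_p$ by construction, while for $v\in T_p^\perp M$ the $M$-geodesic $t\mapsto E(tv)=\exp_p(tv)$ is carried by $\varphi$ to the $\ol M$-geodesic $t\mapsto\ol E(t\psi(v))$, whose velocity at $t=0$ is $\psi(v)$; hence $d\varphi_p(v)=\psi(v)$. Therefore $d\varphi_p=\Psi_p$ for every $p\in S$, a linear isometry, so $(\varphi^*\ol g)_p=g_p$ on $S$, i.e. $\varphi$ is a 4D isometric matching reference; and $(2)$, $(3)$ follow from $\varphi_*N=\psi(N)=\ol N$ and $\varphi_*X=\psi(X)=\ol X$.

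The remaining points are routine, and the only place I would expect any mild work: after possibly shrinking $U$ one must check that $\varphi$ is an immersion near $S$ (clear on $S$ since $d\varphi|_S=\Psi$ is invertible, hence clear on a neighbourhood using compactness of $S$), that it is injective and a homeomorphism onto its image so that it is a bona fide embedding, and that it preserves orientation and time orientation on all of $U$ (open conditions satisfied on $S$). None of these is an obstacle: they are exactly the standard conclusions of the tubular neighbourhood theorem applied to $S$ and to $\ol S$ and then matched up by the fibrewise isometry $\psi$.
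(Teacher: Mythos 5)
Your construction is the paper's own argument: the map $\varphi=\ol E\circ\psi\circ E^{-1}$ is exactly the paper's formula $\varphi(\exp_p(rN_p+sX_p))=\exp_{\ol p}(r\ol N_{\ol p}+s\ol X_{\ol p})$, with $\psi$ the frame-matching normal bundle isometry, and the verification that $d\varphi_p$ sends the orthonormal frame $(N,X,Y,Z)$ to $(\ol N,\ol X,\ol Y,\ol Z)$ is the same. Your write-up is if anything slightly more careful about why $d\varphi_p(v)=\psi(v)$ on normal vectors and about the tubular-neighbourhood/embedding issues, which the paper leaves implicit; no gap.
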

\begin{proof}
Define the reference $\varphi$ as
\begin{equation}\label{eqn-exp}
\varphi(\exp_p(rN_p+sX_p))=\exp_{\ol p}(r\ol N_{\ol p}+s\ol X_{\ol p})
\end{equation}
for any $p\in S$ and $r,s\in (-\epsilon,\epsilon)$ where $\epsilon>0$ is small enough and $\ol p=\varphi_0(p)$. Let $Y_p,Z_p$ be an orthornormal basis of $T_pS$ and
 $$\ol Y_{\ol p}={\varphi_0}_{*p}Y_p,\ \ol Z_{\ol p}={\varphi_0}_{*p}Z_p.$$
Then, $(N_p,X_p,Y_p,Z_p)$ and $(\ol N_{\ol p}, \ol X_{\ol p}, \ol Y_{\ol p}, \ol Z_{\ol p})$ are orthnormal bases for $T_pM$ and $T_{\ol p}\ol M$ respectively. By \eqref{eqn-exp}, it is clear that $\varphi|_S=\varphi_0$ and
\begin{equation}
\varphi_{*p}(N_p)=\ol N_{\ol p},\ \varphi_{*p}(X_p)=\ol X_{\ol p},\ \varphi_{*p}(Y_p)=\ol Y_{\ol p},\ \varphi_{*p}(Z_p)=\ol Z_{\ol p}.
\end{equation}
This means that $\varphi^*\ol g=g$ on $S$.
\end{proof}
Next, we prove that the CNT quasi-local energy with respect to 4D isometric matching references depends only on the 1-jet of the reference.
\begin{lem}\label{lem-1-jet}
 Let $(M,g),(\ol M,\ol g)$, $S$ and $N$ be the same as in the Lemma \ref{lem-ref}.
Let $\varphi_1$ and $\varphi_2$ be two 4D isometric matching references such that
$$\varphi_1=\varphi_2\ \mbox{and}\ d\varphi_1=d\varphi_2$$
on $S$. Then,
\begin{equation}
E(S,N,\varphi_1)=E(S,N,\varphi_2).
\end{equation}
\end{lem}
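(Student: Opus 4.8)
The plan is to show that the integrand $\iota^*[(\omega^a{}_b-\ol\omega^a{}_b)\wedge i_N\eta_a{}^b]$ in \eqref{eqn-CNT-E-1} depends pointwise on $S$ only on the 1-jet of $\varphi$. Since $\varphi_1$ and $\varphi_2$ agree to first order on $S$, we have $\varphi_1^*\ol g=\varphi_2^*\ol g$ to first order along $S$, i.e.\ these two pulled-back metrics agree as metrics on $M$ together with all their first derivatives in directions tangent to $S$. The quantities entering \eqref{eqn-CNT-E-1} are: the physical connection form $\omega^a{}_b$, which does not involve $\varphi$ at all; the reference connection form $\ol\omega^a{}_b$, which is the Levi-Civita connection of $\varphi^*\ol g$; the form $\eta_a{}^b$, which depends only on $g$; and the pull-back $\iota^*$ and contraction $i_N$, which involve only $S$ and $N$.

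So the only term to analyze is $\iota^*(\ol\omega^a{}_b\wedge i_N\eta_a{}^b)$. First I would fix a local frame and observe that $\ol\omega^a{}_b$, as a 1-form on $M$, involves the metric coefficients of $\varphi^*\ol g$ and their first derivatives; a priori this includes derivatives in directions transverse to $S$, which are \emph{not} controlled by the 1-jet of $\varphi$ on $S$. The key point is that after applying $\iota^*$ the transverse components drop out: $\iota^*(\ol\omega^a{}_b)$ only sees the restriction of the 1-form $\ol\omega^a{}_b$ to vectors tangent to $S$, hence only sees $\ol\omega^a{}_b(T)$ for $T\in TS$. But $\ol\omega^a{}_b(T)$ is built from the metric coefficients of $\varphi^*\ol g$ (order zero, controlled by $\varphi_1=\varphi_2$ on $S$) and from $T$-derivatives of those coefficients, i.e.\ derivatives along $S$ (controlled by $d\varphi_1=d\varphi_2$ on $S$, since a tangential derivative of $\varphi_i^*\ol g$ at $p\in S$ is determined by the 1-jet of $\varphi_i$ along $S$). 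Therefore $\iota^*(\ol\omega^a{}_b)$ for $\varphi_1$ equals $\iota^*(\ol\omega^a{}_b)$ for $\varphi_2$. Since $\eta_a{}^b$, $i_N$, and the wedge are identical, the two integrands coincide, and integrating over $S$ gives $E(S,N,\varphi_1)=E(S,N,\varphi_2)$.

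A cleaner way to organize the same argument, which I would actually write out, is to use the splitting of the connection form into a pull-back-of-tangential-data piece and a second-fundamental-form piece. Working in an adapted orthonormal coframe $\{\theta^a\}$ for $\varphi^*\ol g$ on a neighborhood of $S$ with $\theta^3,\theta^4$ (say) annihilating $TS$, the structure equations $d\theta^a = -\ol\omega^a{}_b\wedge\theta^b$ determine $\ol\omega^a{}_b$, and one checks that $\iota^*\ol\omega^a{}_b$ is expressible through $\iota^*d\theta^a$, $\iota^*\theta^b$, and the mean-curvature/normal-connection data of $\ol S\subset\ol M$ — all of which are functions of the induced metric on $S$ (fixed, since both $\varphi_i$ are isometric embeddings) and of the 1-jet of $\varphi_i$ normal to $S$, which is exactly $d\varphi_i|_S$. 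Since the paper later expresses $E$ via $\ol H$, $\ol\nabla_{\ol N^\top}\ol X$, etc.\ (Theorem \ref{thm-CNT-BY}), and all of these are manifestly 1-jet quantities, either route closes the lemma.

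The main obstacle is the bookkeeping to confirm that no genuinely transverse (second-order-in-$\varphi$, or first-order-transverse-in-the-metric) derivative survives after $\iota^*$. This is where one must be careful: $\eta_a{}^b$ has a factor $i_N$ and the wedge with $\ol\omega^a{}_b$ is a 2-form on $M$ restricted to the 2-dimensional $S$, so a priori one might worry a normal derivative sneaks in; but $\iota^*$ of a 2-form only evaluates on pairs of tangent vectors, and $i_N\eta_a{}^b$ pulled back to $S$ is a function times the area form plus lower, all determined by $g$ and $N$ alone. Once that is pinned down, the rest is immediate. I expect to dispatch this in a short paragraph once the adapted-frame setup is in place.
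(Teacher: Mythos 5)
Your reduction of the lemma to the single claim that $\iota^*\ol\omega^a{}_b$ is determined by the 1-jet of $\varphi$ on $S$ is exactly right, and your second route (adapted frame, second-fundamental-form and normal-connection data of $\ol S\subset\ol M$) is essentially the paper's proof. But the justification you give in the first paragraph --- that $\ol\omega^a{}_b(T)$ for $T\in TS$ is built only from the components of $\varphi^*\ol g$ and their $T$-derivatives along $S$ --- is false, and it is precisely the ``bookkeeping'' you flag as the main obstacle. By the Koszul formula, $\vv<\ol\nabla_{T}e_b,e_a>_{\varphi^*\ol g}$ contains the terms $e_b\vv<T,e_a>$ and $e_a\vv<T,e_b>$; when $e_a$ or $e_b$ is transverse to $S$ these are transverse first derivatives of $\varphi^*\ol g$ on $S$, which involve second derivatives $\partial_\sigma\partial_\mu\varphi^\alpha$ with both indices transverse --- genuine 2-jet data not controlled by $\varphi|_S$ and $d\varphi|_S$. (Equivalently, in coordinates $\Gamma^\lambda_{T\nu}=\tfrac12 g^{\lambda\sigma}(\partial_T g_{\sigma\nu}+\partial_\nu g_{\sigma T}-\partial_\sigma g_{T\nu})$ with $\nu,\sigma$ running over all directions.) So the first route, as literally stated, does not close; the transverse-derivative terms must be shown to cancel, and that cancellation is the whole content of the lemma.

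The clean mechanism, and the one the paper uses, is naturality of the Levi-Civita connection under the isometry $\varphi_i:(U,\varphi_i^*\ol g)\to(\ol M,\ol g)$: for $T$ tangent to $S$,
\begin{equation*}
\ol\nabla^{\varphi_i^*\ol g}_{T}e_b=(\varphi_i^{-1})_*\bigl(\ol\nabla^{\ol g}_{{\varphi_i}_*T}\,{\varphi_i}_*e_b\bigr),
\end{equation*}
and since ${\varphi_i}_*T$ is tangent to $\ol S$, the right-hand side depends only on the fixed connection of $\ol g$ and on the restriction of ${\varphi_i}_*e_b$ to $\ol S$, i.e.\ only on $d\varphi_i|_S$. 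This is your second route with ``one checks'' actually carried out. Your third observation is also a valid independent proof: the formula of Theorem \ref{thm-CNT-BY} is established by direct computation without appeal to this lemma, and its right-hand side manifestly depends only on $\ol S=\varphi(S)$, $\ol X=\varphi_*X$, $\ol N^\perp=\varphi_*N^\perp$ and $\ol N^\top=\varphi_*N^\top$, all of which are 1-jet data.
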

\begin{proof}
Let $e_0,e_1,e_2,e_3$ be a local orthonormal frame of $(M,g)$ such that $e_2,e_3$ are tangential to $S$ and let $\omega^0,\omega^1,\omega^2,\omega^3$ be its dual frame. By \eqref{eqn-CNT-E-1}, we only need to verify that,
$$\iota^*\ol \omega^a{}_b=\iota^*\tilde\omega^a{}_b,$$
where $\ol\omega$ and $\tilde \omega$ are the connection forms of $\varphi_1^*\ol g$ and $\varphi_2^*\ol g$ respectively. Indeed,
\begin{equation}
\begin{split}
&\iota^*\ol \omega^a{}_b\\
=&\ol\Gamma_{2b}^a\omega^2+\ol\Gamma_{3b}^a\omega^3\\
=&\vv<\ol\nabla_{e_2}e_b,\eta_{aa}e_a>_{\varphi_1^*\ol g}\omega^2+\vv<\ol\nabla_{e_3}e_b,\eta_{aa}e_a>_{\varphi_1^*\ol g}\omega^3\\
=&\vv<\ol\nabla_{d\varphi_1(e_2)}d\varphi_1(e_b),\eta_{aa}d\varphi_1(e_a)>_{\ol g}\circ\varphi_1\omega^2\\
&+\vv<\ol\nabla_{d\varphi_1(e_3)}d\varphi_1(e_b),\eta_{aa}d\varphi_1(e_a)>_{\ol g}\circ\varphi_1\omega^3\\
=&\vv<\ol\nabla_{d\varphi_2(e_2)}d\varphi_2(e_b),\eta_{aa}d\varphi_2(e_a)>_{\ol g}\circ\varphi_2\omega^2\\
&+\vv<\ol\nabla_{d\varphi_2(e_3)}d\varphi_2(e_b),\eta_{aa}d\varphi_2(e_a)>_{\ol g}\circ\varphi_2\omega^3\\
=&\iota^*\tilde \omega^a{}_b.
\end{split}
\end{equation}

\end{proof}

Finally, we come to prove Theorem \ref{thm-CNT-BY}.
\begin{proof}[Proof of Theorem \ref{thm-CNT-BY}]Let $e_0=\frac{N^\perp}{\|N^\perp\|}$, and $e_1=X$. Let $e_2,e_3$ be a local orthonormal frame of $S$ and extend $e_0,e_1,e_2,e_3$ to a local orthonormal frame of $M$. Let $\omega^0,\omega^1,\omega^2,\omega^3$ be the dual frame of $e_0,e_1,e_2,e_3$. Since $N\perp e_1$, suppose that
\begin{equation}
N=N^0e_0+N^2e_2+N^3e_3.
\end{equation}
It is clear that $N^0=\|N^\perp\|$ and $N^\top=N^2e_3+N^3e_3$.
Then
\begin{equation}\label{eqn-eta}
\begin{split}
&\iota^*i_N\eta_{a}{}^b\\
=&N^{\mu}\eta^{b\beta}\epsilon_{a\beta\mu2}\omega^2+N^{\mu}\eta^{b\beta}\epsilon_{a\beta\mu3}\omega^3\\
=&N^0(\e_{ab02}\omega^2+\e_{ab03}\omega^3)+\eta^{bb}\e_{ab23}(-N^3\omega^2+N^2\omega^3).
\end{split}
\end{equation}
Note that
\begin{equation}
\begin{split}
&\frac{1}{16\pi}\int_{S}\iota^*(\omega^a{}_b-\ol\omega^a{}_b)\wedge N^0(\e_{ab02}\omega^2+\e_{ab03}\omega^3)\\
=&\frac{1}{16\pi}\int_{S}N^0[(\Gamma_{2b}^a-\ol\Gamma_{2b}^a)\omega^2+(\Gamma_{3b}^a-\ol\Gamma_{3b}^a)\omega^3]\wedge (\epsilon_{ab02}\omega^2+\epsilon_{ab03}\omega^3)\\
=&\frac{1}{16\pi}\int_{S}N^0(\Gamma_{22}^1+\Gamma_{33}^1-\ol \Gamma_{22}^{1}-\ol\Gamma_{33}^{1})\omega^2\wedge\omega^3\\
&-\frac{1}{16\pi}\int_{S}N^0(\Gamma_{21}^2+\Gamma_{31}^3-\ol \Gamma_{21}^{2}-\ol\Gamma_{31}^{3})\omega^2\wedge\omega^3,\\
\end{split}
\end{equation}
and on $S$,
\begin{equation}\label{eqn-Gamma-22}
\Gamma_{22}^1+\Gamma_{33}^1=\vv<\nabla_{e_2}e_2+\nabla_{e_3}e_3,e_1>=\vv<H,X>,
\end{equation}
\begin{equation}\label{eqn-Gamma-21}
\Gamma_{21}^2+\Gamma_{31}^{3}=\vv<\nabla_{e_2}e_1,e_2>+\vv<\nabla_{e_3}e_1,e_3>=-\vv<H,X>.
\end{equation}
Moreover, on $S$,
\begin{equation}\label{eqn-o-Gamma-22}
\begin{split}
&\ol \Gamma_{22}^{1}+\ol\Gamma_{33}^{1}\\
=&\vv<\ol\nabla_{e_2}e_2,e_1>_{\varphi^*\ol g}+\vv<\ol\nabla_{e_3}e_3,e_1>_{\varphi^*\ol g}\\
=&\vv<\ol \nabla_{\varphi_*e_2}\varphi_*e_2,\varphi_*e_1>_{\ol g}+\vv<\ol \nabla_{\varphi_*e_3}\varphi_*e_3,\varphi_*e_1>_{\ol g}\\
=&\vv<\ol H,\ol X>_{\ol g}
\end{split}
\end{equation}
and
\begin{equation}\label{eqn-o-Gamma-21}
\begin{split}
&\ol \Gamma_{21}^{2}+\ol\Gamma_{31}^{3}\\
=&\vv<\ol\nabla_{e_2}e_1,e_2>_{\varphi^*\ol g}+\vv<\ol\nabla_{e_3}e_1,e_3>_{\varphi^*\ol g}\\
=&\vv<\ol \nabla_{\varphi_*e_2}\varphi_*e_1,\varphi_*e_2>_{\ol g}+\vv<\ol \nabla_{\varphi_*e_3}\varphi_*e_1,\varphi_*e_3>_{\ol g}\\
=&-\vv<\ol H,\ol X>_{\ol g}.
\end{split}
\end{equation}
So,
\begin{equation}\label{eqn-energy-1}
\begin{split}
&\frac{1}{16\pi}\int_{S}\iota^*(\omega^a{}_b-\ol\omega^a{}_b)\wedge N^0(\e_{ab02}\omega^2+\e_{ab03}\omega^3)\\
=&\frac{1}{8\pi}\int_{S}\|N^\perp\|\left(-\vv<\ol H,\ol X>+\vv<H,X>\right)dV_S.
\end{split}
\end{equation}
Furthermore,
\begin{equation}\label{eqn-energy-2}
\begin{split}
&\frac{1}{16\pi}\int_S\iota^*(\omega^a{}_b-\ol\omega^a{}_b)\wedge\eta^{bb}\e_{ab23}(-N^3\omega^2+N^2\omega^3)\\
=&\frac{1}{16\pi}\int_S\iota^*[\omega^0{}_1+\omega^1{}_0-(\ol\omega^0{}_1+\ol\omega^1{}_0)]\wedge(-N^3\omega^2+N^2\omega^3)\\
=&\frac{1}{8\pi}\int_S\iota^*(\omega^0{}_1-\ol\omega^0{}_1)\wedge(-N^3\omega^2+N^2\omega^3)\\
=&\frac{1}{8\pi}\int_S[(N^2\Gamma_{21}^0+N^3\Gamma_{31}^0)-(N^2\ol\Gamma_{21}^0+N^3\ol\Gamma_{31}^0)]\omega^2\wedge\omega^3\\
\end{split}
\end{equation}
where we have used the fact $\omega^0{}_1=\omega^1{}_0$ and $\iota^*\ol\omega^0{}_1=i^*\ol\omega^1{}_0$. Similarly as in \eqref{eqn-Gamma-22} and \eqref{eqn-o-Gamma-22}, it is not hard to see that
\begin{equation}
N^2\Gamma_{21}^0+N^3\Gamma_{31}^0=-\vv<\nabla_{N^2e_2+N^3e^3}e_1,e_0>=-\frac{1}{\|N^\perp\|}\vv<\nabla_{N^\top}X,N^\perp>
\end{equation}
and
\begin{equation}
N^2\ol\Gamma_{21}^0+N^3\ol\Gamma_{31}^0=-\vv<\ol\nabla_{N^2e_2+N^3e^3}e_1,e_0>_{\varphi^*\ol g}=-\frac{1}{\|N^\perp\|}\vv<\ol \nabla_{\ol N^\top}\ol X,\ol N^\perp>.
\end{equation}
Therefore,
\begin{equation}\label{eqn-energy-3}
\begin{split}
&\frac{1}{16\pi}\int_S\iota^*(\omega^a{}_b-\ol\omega^a{}_b)\wedge\eta^{bb}\e_{ab23}(-N^3\omega^2+N^2\omega^3)\\
=&\frac{1}{8\pi}\int_S\frac{1}{\|N^\perp\|}\left(\vv<\ol \nabla_{\ol N^\top}\ol  X,\ol N^\perp>-\vv<\nabla_{ N^\top}X,N^\perp>\right)dV_S.
\end{split}
\end{equation}
Combining \eqref{eqn-energy-1} and \eqref{eqn-energy-3}, we obtain \eqref{eqn-CNT} and hence Theorem \ref{thm-CNT-BY}.
\end{proof}

\section{Second variation of $E(x,y)$ at $x=y=0$}
In this section, we compute the second variation of the CNT quasi-local energy $E(x,y)$ for axially symmetric Kerr-like metrics at the obvious critical point $(x,y)=(0,0)$ as introduced in the first section. Throughout this section, we adopt the notations used there.

First, we have the following second variation of $\mathfrak{B}(x,y)$.
\begin{lem}\label{lem-2nd-B}
\begin{equation}\label{eqn-2nd-B}
\begin{split}
&\mathfrak{B}(\e Ru,\e \Sigma v)\\
=&-\frac{(\sqrt H\Sigma)_r}{ R}+\frac{\Sigma\left(1-\frac{H_{\theta\theta}}{2\Sigma^2}\right)}{\sqrt{1-\frac{H_\theta^2}{4H\Sigma^2}}}+\frac{H_\theta\Sigma_\theta}{2\Sigma^2\sqrt{1-\frac{H_\theta^2}{4H\Sigma^2}}}+\\
&\Bigg(-\frac{(\sqrt H\Sigma)_r}{2 R}(u^2+v^2)+\Sigma\left(\frac{1-\frac{1-\frac{H_{\theta\theta}}{2\Sigma^2}}{2\left(1-\frac{H_\theta^2}{4H\Sigma^2}\right)}}{\sqrt{1-\frac{H_\theta^2}{4H\Sigma^2}}}-\frac{H_\theta\Sigma_\theta}{4\Sigma^3\left(1-\frac{H_\theta^2}{4H\Sigma^2}\right)^{3/2}}\right)v^2\\
&+\sqrt H u_\theta v+\frac{H_\theta vv_\theta}{2\Sigma \sqrt{1-\frac{H_\theta^2}{4H\Sigma^2}}}\Bigg)\e^2+O(\e^3).
\end{split}
\end{equation}
\end{lem}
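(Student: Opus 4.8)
The plan is a direct second-order Taylor expansion in $\epsilon$ of the explicit expression \eqref{eqn-CNT-B} for $\mathfrak B(x,y)$ evaluated along the curve $(x,y)=(\epsilon Ru,\epsilon\Sigma v)$. Writing the perturbation as $(Ru,\Sigma v)$ rather than directly as $(x,y)$ is precisely what makes the algebra tractable: substituting into \eqref{eqn-a-b-l} gives the clean forms
\[
l=\Sigma^2(1+\epsilon^2v^2),\quad \alpha=R\Sigma\sqrt{1+\epsilon^2(u^2+v^2)},\quad \beta=\beta_0\sqrt{1+\tfrac{4H\Sigma^2}{\beta_0^2}\epsilon^2v^2},
\]
where $\beta_0:=\sqrt{4H\Sigma^2-H_\theta^2}=2\sqrt H\,\Sigma\sqrt{1-H_\theta^2/(4H\Sigma^2)}$, together with $x_\theta=\epsilon(R_\theta u+Ru_\theta)$ and $y_\theta=\epsilon(\Sigma_\theta v+\Sigma v_\theta)$. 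First I would record the elementary expansions $\alpha=R\Sigma\bigl(1+\tfrac12\epsilon^2(u^2+v^2)\bigr)+O(\epsilon^4)$, $\beta^{-1}=\beta_0^{-1}\bigl(1-\tfrac{2H\Sigma^2}{\beta_0^2}\epsilon^2v^2\bigr)+O(\epsilon^4)$, $l^{-1}=\Sigma^{-2}(1-\epsilon^2v^2)+O(\epsilon^4)$, and the identity $(H\Sigma^2)_r/(2\sqrt H\,\Sigma)=(\sqrt H\Sigma)_r$, which is used repeatedly to bring terms into the shape appearing in \eqref{eqn-2nd-B}.

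Then I would expand the five summands of $\mathfrak B$ one at a time. The first, $-\alpha(H\Sigma^2)_r/(2\sqrt HR^2\Sigma^2)$, becomes $-\tfrac{(\sqrt H\Sigma)_r}{R}\bigl(1+\tfrac12\epsilon^2(u^2+v^2)\bigr)$, giving the first constant summand and the $(u^2+v^2)$ part of the quadratic coefficient. The second, $-\sqrt H(H_{\theta\theta}-2l)/\beta$, gives the constant summand $\tfrac{\Sigma(1-H_{\theta\theta}/2\Sigma^2)}{\sqrt{1-H_\theta^2/4H\Sigma^2}}$ plus a $v^2\epsilon^2$-term arising from the $\epsilon^2$-corrections to $l$ and $\beta^{-1}$. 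In the third summand, the $\Sigma_\theta$-part $\sqrt H\,(xy^3\beta+H_\theta\alpha\Sigma^2)\Sigma_\theta/(l\alpha\beta\Sigma)$ (its $xy^3$ piece is $O(\epsilon^4)$) produces the constant summand $\tfrac{H_\theta\Sigma_\theta}{2\Sigma^2\sqrt{1-H_\theta^2/4H\Sigma^2}}$ together with two further $v^2\epsilon^2$-terms from the corrections to $l^{-1}\beta^{-1}$; the remaining piece $-\sqrt H\,R_\theta xy/(R\alpha)=-\sqrt H\,\tfrac{R_\theta uv}{R}\epsilon^2+O(\epsilon^3)$ is exactly cancelled by the matching part of the fourth summand $\sqrt H\,yx_\theta/\alpha=\sqrt H\,v\bigl(\tfrac{R_\theta u}{R}+u_\theta\bigr)\epsilon^2+O(\epsilon^3)$, leaving only the $\sqrt H\,u_\theta v\,\epsilon^2$ term. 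The fifth summand $\sqrt H\,y(H_\theta\alpha-xy\beta)y_\theta/(l\alpha\beta)$ (its $xy\beta$ piece is $O(\epsilon^4)$) contributes $\tfrac{H_\theta}{2\Sigma^2\sqrt{1-H_\theta^2/4H\Sigma^2}}(\Sigma_\theta v^2+\Sigma vv_\theta)\epsilon^2$, supplying the $vv_\theta\epsilon^2$ term and a final $v^2\epsilon^2$-term.

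The last step is to collect the several $v^2\epsilon^2$-contributions — one from the second summand, two from the $\Sigma_\theta$-part of the third, and one from the fifth — and, using $\beta_0^3=8H^{3/2}\Sigma^3(1-H_\theta^2/4H\Sigma^2)^{3/2}$, verify they collapse to the coefficient of $v^2$ inside the large bracket of \eqref{eqn-2nd-B}; in particular the $\Sigma_\theta v^2$-pieces from the third and fifth summands partly cancel, leaving $-\tfrac{H_\theta\Sigma_\theta}{4\Sigma^2(1-H_\theta^2/4H\Sigma^2)^{3/2}}v^2\epsilon^2$, while the pieces without $\Sigma_\theta$ recombine into $\Sigma\cdot\tfrac{1-\frac{1-H_{\theta\theta}/2\Sigma^2}{2(1-H_\theta^2/4H\Sigma^2)}}{\sqrt{1-H_\theta^2/4H\Sigma^2}}\,v^2\epsilon^2$. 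That the $O(\epsilon)$ term is absent is automatic: none of $\alpha,\beta,l$ carries an $\epsilon^1$-correction, and every summand carrying an explicit $xy$, $x_\theta$ or $y_\theta$ factor is already $O(\epsilon^2)$ — consistent with $(0,0)$ being a critical point of $E(x,y)$. I expect the only genuine difficulty to be bookkeeping: one must keep straight which apparently $\epsilon$-independent factors ($\alpha,\beta,l$) actually feed an $\epsilon^2$-correction into the quadratic coefficient and which contribute only at leading order, and apply the repackaging identities for $\beta_0$ and $(H\Sigma^2)_r$ consistently so that the result emerges in the normalized form of the statement.
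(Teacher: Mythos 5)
Your proposal is correct and follows essentially the same route as the paper: the paper likewise substitutes $x=Rz$, $y=\Sigma w$ (so that $\alpha=R\Sigma\sqrt{z^2+w^2+1}$, $\beta=2\sqrt H\Sigma\sqrt{w^2+1-H_\theta^2/(4H\Sigma^2)}$, $l=\Sigma^2(w^2+1)$), then sets $z=\e u$, $w=\e v$ and Taylor-expands each summand of $\mathfrak B$ to second order in $\e$. Your term-by-term bookkeeping (the cancellation of the $R_\theta uv$ pieces between the third and fourth summands, and the partial cancellation of the $\Sigma_\theta v^2$ pieces between the third and fifth) checks out against the stated coefficients.
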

\begin{proof}
Suppose that $x=Rz$ and $y=\Sigma w$. Then,
\begin{equation}\label{eqn-a-b-l-2}
\left\{\begin{array}{l}\alpha=R\Sigma\sqrt{z^2+w^2+1}\\
\beta=2\sqrt H\Sigma\sqrt{w^2+1-\frac{H_\theta^2}{4H\Sigma^2}}\\
l=\Sigma^2(w^2+1).
\end{array}\right.
\end{equation}
Substituting these into \eqref{eqn-CNT-B}, we have
\begin{equation}
\begin{split}
&\frac{1}{\Sigma}\mathfrak{B}(x,y)\\
=&-\frac{(\sqrt H\Sigma)_r}{\Sigma R}\sqrt{z^2+w^2+1}+\frac{(w^2+1)-\frac{H_{\theta\theta}}{2\Sigma^2}}{\sqrt{w^2+1-\frac{H_\theta^2}{4H\Sigma^2}}}+\frac{H_\theta\Sigma_\theta}{2\Sigma^3\sqrt{w^2+1-\frac{H_\theta^2}{4H\Sigma^2}}}\\
&+\frac{\sqrt Hz_\theta w}{\Sigma\sqrt{z^2+w^2+1}}+\frac{H_\theta ww_\theta}{2\Sigma^2 (w^2+1)\sqrt{w^2+1-\frac{H_\theta^2}{4H\Sigma^2}}}-\frac{\sqrt Hzw^2w_\theta}{\Sigma(w^2+1)\sqrt{z^2+w^2+1}}.
\end{split}
\end{equation}
By setting $z=\e u$ and $w=\e v$ in the last identity and computing the Taylor expansion of each term with respect to $\e$ up to second order, we obtain the conclusion.
\end{proof}
Next, we compute the second variation of $E(x,y)$.
\begin{lem}\label{lem-2nd-E}
\begin{equation}\label{eqn-2nd-E}
\begin{split}
&E(\e Ru,\e\Sigma v)\\
=&\frac{1}{4}\int_0^\pi\left(-\frac{(\sqrt H\Sigma)_r}{ R}+\frac{\Sigma\left(1-\frac{H_{\theta\theta}}{2\Sigma^2}\right)}{\sqrt{1-\frac{H_\theta^2}{4H\Sigma^2}}}+\frac{H_\theta\Sigma_\theta}{2\Sigma^2\sqrt{1-\frac{H_\theta^2}{4H\Sigma^2}}}\right)d\theta+\\
&\frac{\e^2}{4}\int_0^\pi\left(-\frac{(\sqrt H\Sigma)_r}{2 R}(u^2+v^2)+\frac{\Sigma\sqrt{1-\frac{H_\theta^2}{4H\Sigma^2}}}{2} v^2+\sqrt H u_\theta v\right)d\theta+O(\e^3).
\end{split}
\end{equation}
\end{lem}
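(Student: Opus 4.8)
The plan is to integrate the pointwise expansion of Lemma \ref{lem-2nd-B} over $[0,\pi]$, using $E(x,y)=\frac14\int_0^\pi\mathfrak B(x,y)\,d\theta$ from \eqref{eqn-CNT-x-y}. Substituting $x=\e Ru$, $y=\e\Sigma v$ and invoking \eqref{eqn-2nd-B}, the zeroth-order term integrates directly to the first line of \eqref{eqn-2nd-E}, while among the $\e^2$-contributions the pieces $-\frac{(\sqrt H\Sigma)_r}{2R}(u^2+v^2)$ and $\sqrt H u_\theta v$ are already in the required form and survive the integration verbatim. Writing $P:=1-\frac{H_\theta^2}{4H\Sigma^2}$ for brevity (so that $\beta=2\sqrt H\,\Sigma\sqrt P$ along $y=0$), the claim reduces to
\[
\int_0^\pi\left[\Sigma\left(\frac{1-\frac{1-\frac{H_{\theta\theta}}{2\Sigma^2}}{2P}}{\sqrt P}-\frac{H_\theta\Sigma_\theta}{4\Sigma^3P^{3/2}}\right)v^2+\frac{H_\theta v v_\theta}{2\Sigma\sqrt P}\right]d\theta=\int_0^\pi\frac{\Sigma\sqrt P}{2}\,v^2\,d\theta .
\]

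Next I would turn the mixed term into a total derivative, $\frac{H_\theta v v_\theta}{2\Sigma\sqrt P}=\frac{H_\theta}{4\Sigma\sqrt P}(v^2)_\theta$, and integrate by parts on $[0,\pi]$. Since $y=\e\Sigma v$ with $y(0)=y(\pi)=0$ (smoothness of $T$ on $S$) and $\Sigma(0),\Sigma(\pi)\neq0$, one has $v(0)=v(\pi)=0$, so the boundary term $\left[\frac{H_\theta}{4\Sigma\sqrt P}\,v^2\right]_0^\pi$ vanishes and the mixed term contributes $-\int_0^\pi\left(\frac{H_\theta}{4\Sigma\sqrt P}\right)_\theta v^2\,d\theta$. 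The identity above then collapses to the pointwise identity
\[
\Sigma\left(\frac{1-\frac{1-\frac{H_{\theta\theta}}{2\Sigma^2}}{2P}}{\sqrt P}-\frac{H_\theta\Sigma_\theta}{4\Sigma^3P^{3/2}}\right)-\left(\frac{H_\theta}{4\Sigma\sqrt P}\right)_\theta=\frac{\Sigma\sqrt P}{2},
\]
which I would verify by a direct computation: expand $\left(\frac{H_\theta}{4\Sigma\sqrt P}\right)_\theta$ by the product and chain rules, insert $P_\theta=-\frac{H_\theta H_{\theta\theta}}{2H\Sigma^2}+\frac{H_\theta^3}{4H^2\Sigma^2}+\frac{H_\theta^2\Sigma_\theta}{2H\Sigma^3}$, then multiply the identity through by $P^{3/2}$ and replace every remaining undifferentiated $P$ by $1-\frac{H_\theta^2}{4H\Sigma^2}$; after cancellation both sides become the same rational expression in $H$, $\Sigma$ and their first and second $\theta$-derivatives. (As a sanity check, when $\Sigma\equiv1$ both sides, multiplied by $P^{3/2}$, equal $\frac12-\frac{H_\theta^2}{4H}+\frac{H_\theta^4}{32H^2}$.)

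I expect the only real obstacle to be the algebra in this last step: verifying the pointwise identity for general non-constant $\Sigma$ is a purely mechanical, if somewhat bulky, manipulation, with no conceptual content beyond the product/chain-rule expansion and the single integration by parts. Once the identity is established, multiplying it by $v^2$, integrating over $[0,\pi]$ and restoring the factor $\frac{\e^2}{4}$ gives exactly the second line of \eqref{eqn-2nd-E}, completing the proof.
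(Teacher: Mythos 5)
Your proposal is correct and follows essentially the same route as the paper: integrate the expansion of Lemma \ref{lem-2nd-B}, rewrite $\frac{H_\theta v v_\theta}{2\Sigma\sqrt{P}}$ as $\frac{H_\theta (v^2)_\theta}{4\Sigma\sqrt{P}}$ and integrate by parts (the boundary term vanishing since $v(0)=v(\pi)=0$), then simplify the coefficient of $v^2$. The pointwise identity you isolate is exactly the ``simplify the coefficient of $v^2$'' step the paper leaves implicit, and it checks out: after multiplying by $P^{3/2}/\Sigma$ and using $1-P=\frac{H_\theta^2}{4H\Sigma^2}$, the $H_{\theta\theta}$ and $H_\theta\Sigma_\theta$ terms cancel and the remainder collapses to $P-\frac12+\frac{(1-P)^2}{2}=\frac{P^2}{2}$.
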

\begin{proof}
The conclusion is directly followed by taking the integration of \eqref{eqn-2nd-B}, applying integration by parts to the term
\begin{equation}
\int_0^\pi\frac{H_\theta vv_\theta}{2\Sigma \sqrt{1-\frac{H_\theta^2}{4H\Sigma^2}}}d\theta=\int_0^\pi\frac{H_\theta (v^2)_\theta}{4\Sigma \sqrt{1-\frac{H_\theta^2}{4H\Sigma^2}}}d\theta,
\end{equation}
and then simplify the coefficient of $v^2$.
\end{proof}
To take care of the term $\sqrt H u_\theta v$ in \eqref{eqn-2nd-E}, setting $u=-f(\theta)\cos\theta$ and $v=f(\theta)\sin\theta$ in \eqref{eqn-2nd-E}, and integrating by parts, we have the following
\begin{lem}\label{lem-2nd-E-f}
\begin{equation}\label{eqn-2nd-E-f}
\begin{split}
&E(-\e R f\cos\theta,\e Rf\sin\theta)\\
=&\frac{1}{4}\int_0^\pi\left(-\frac{(\sqrt H\Sigma)_r}{ R}+\frac{\Sigma\left(1-\frac{H_{\theta\theta}}{2\Sigma^2}\right)}{\sqrt{1-\frac{H_\theta^2}{4H\Sigma^2}}}+\frac{H_\theta\Sigma_\theta}{2\Sigma^2\sqrt{1-\frac{H_\theta^2}{4H\Sigma^2}}}\right)d\theta+\frac{\e^2}{4}\int_0^\pi Kf^2d\theta+O(\e^3)
\end{split}
\end{equation}
where
\begin{equation}\label{eqn-K}
K=-\frac{(\sqrt H\Sigma)_r}{2R}+\frac{\Sigma\sqrt{1-\frac{H_\theta^2}{4H\Sigma^2}}}{2}\sin^2\theta+\frac{H_\theta}{4\sqrt H}\cos\theta\sin\theta +\frac{\sqrt{H}}{2}.
\end{equation}
\end{lem}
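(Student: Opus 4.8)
The plan is to substitute the specific ansatz $u=-f(\theta)\cos\theta$, $v=f(\theta)\sin\theta$ into the second-order term of \eqref{eqn-2nd-E} and reorganize it, via one integration by parts, into a single quadratic expression $\int_0^\pi Kf^2\,d\theta$. The zeroth-order term is untouched, so I would only work with the $\e^2$ integrand
\[
-\frac{(\sqrt H\Sigma)_r}{2R}(u^2+v^2)+\frac{\Sigma\sqrt{1-\frac{H_\theta^2}{4H\Sigma^2}}}{2}v^2+\sqrt H\,u_\theta v.
\]
First I would note the obvious simplification $u^2+v^2=f^2$ and $v^2=f^2\sin^2\theta$, which handles the first two terms immediately, contributing $-\frac{(\sqrt H\Sigma)_r}{2R}$ and $\frac{\Sigma}{2}\sqrt{1-\frac{H_\theta^2}{4H\Sigma^2}}\sin^2\theta$ to $K$.

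The one term requiring care is $\sqrt H\,u_\theta v$. With the ansatz, $u_\theta=-f'(\theta)\cos\theta+f(\theta)\sin\theta$, so
\[
\sqrt H\,u_\theta v=\sqrt H\big(-f'\cos\theta+f\sin\theta\big)f\sin\theta
=-\sqrt H\,f f'\cos\theta\sin\theta+\sqrt H\,f^2\sin^2\theta.
\]
The second piece is already of the form $(\text{coefficient})f^2$, contributing $\sqrt H\sin^2\theta$. For the first piece I would write $ff'=\tfrac12(f^2)'$ and integrate by parts over $[0,\pi]$:
\[
-\int_0^\pi\sqrt H\cos\theta\sin\theta\cdot\tfrac12(f^2)'\,d\theta
=\tfrac12\int_0^\pi\big(\sqrt H\cos\theta\sin\theta\big)_\theta f^2\,d\theta,
\]
the boundary terms vanishing because $\cos\theta\sin\theta=\tfrac12\sin 2\theta$ vanishes at $\theta=0,\pi$ (and $f$ is smooth). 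Expanding $(\sqrt H\cos\theta\sin\theta)_\theta=\frac{H_\theta}{2\sqrt H}\cos\theta\sin\theta+\sqrt H(\cos^2\theta-\sin^2\theta)$ gives the contribution $\frac{H_\theta}{4\sqrt H}\cos\theta\sin\theta+\frac{\sqrt H}{2}(\cos^2\theta-\sin^2\theta)$ to $K$.

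Finally I would collect all contributions: $-\frac{(\sqrt H\Sigma)_r}{2R}+\frac{\Sigma}{2}\sqrt{1-\frac{H_\theta^2}{4H\Sigma^2}}\sin^2\theta+\sqrt H\sin^2\theta+\frac{H_\theta}{4\sqrt H}\cos\theta\sin\theta+\frac{\sqrt H}{2}\cos^2\theta-\frac{\sqrt H}{2}\sin^2\theta$, and combine the three $\sqrt H$ terms: $\sqrt H\sin^2\theta+\frac{\sqrt H}{2}\cos^2\theta-\frac{\sqrt H}{2}\sin^2\theta=\frac{\sqrt H}{2}(\sin^2\theta+\cos^2\theta)=\frac{\sqrt H}{2}$, which yields exactly \eqref{eqn-K}. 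The only thing to watch is the justification that the boundary terms in the integration by parts vanish, but this is immediate from $\sin 2\theta|_{0,\pi}=0$; there is no real obstacle here, the lemma is a bookkeeping computation built on the already-established Lemma \ref{lem-2nd-E}.
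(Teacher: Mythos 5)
Your proposal is correct and follows essentially the same route as the paper: isolate the $\sqrt H\,u_\theta v$ term, write $ff'=\tfrac12(f^2)'$, integrate by parts (boundary terms vanish since $\cos\theta\sin\theta=0$ at $\theta=0,\pi$), and combine the resulting $\sqrt H$ terms into $\tfrac{\sqrt H}{2}$, recovering \eqref{eqn-K} exactly. Note only that, as your substitution into \eqref{eqn-2nd-E} makes clear, the second argument in the lemma's left-hand side should read $\e\Sigma f\sin\theta$ rather than $\e Rf\sin\theta$ (a typo in the statement, consistent with the usage in the proof of Theorem \ref{thm-saddle}).
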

\begin{proof}
Note that
\begin{equation}
\begin{split}
&\int_0^\pi \sqrt H u_\theta vd\theta\\
=&\int_0^\pi\sqrt{H}(-f_\theta\cos\theta+f\sin\theta)f\sin\theta d\theta\\
=&\int_0^\pi \sqrt H \sin^2\theta f^2d\theta-\frac{1}{2}\int_0^\pi \sqrt H\cos\theta\sin\theta (f^2)_\theta
d\theta\\
=&\frac{1}{2}\int_0^\pi \sqrt H f^2d\theta+\frac{1}{4}\int_0^\pi \frac{H_\theta}{\sqrt H}\cos\theta\sin\theta f^2d\theta\\
\end{split}
\end{equation}
Substituting this into \eqref{eqn-2nd-E}, we get the conclusion.
\end{proof}
By combing Lemma \ref{lem-2nd-E} and Lemma \ref{lem-2nd-E-f}, we have the following result.
\begin{thm}\label{thm-saddle}
If the mean curvature of $S$ along the radial outer normal is positive, and there is some point $\theta_0\in [0,\pi]$ such that $K(\theta_0)>0$, then $(x,y)=(0,0)$ is a saddle critical point of $E(x,y)$.
\end{thm}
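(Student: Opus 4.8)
The plan is to exploit the two expansions already established in Lemma \ref{lem-2nd-E} and Lemma \ref{lem-2nd-E-f}, which compute the second variation of $E(x,y)$ along two different families of directions through $(0,0)$, and to show that these second variations have opposite signs, so that $(0,0)$ is neither a local max nor a local min. Concretely, the leading term (the value $E(0,0)$) is identical in both lemmas, so the sign of $E$ near $(0,0)$ along each family is governed by the $\e^2$-coefficient; I will produce one family along which this coefficient is negative and one along which it is positive.

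First I would produce a direction of negative second variation. In \eqref{eqn-2nd-E}, take $v\equiv 0$ and $u$ any nonzero admissible perturbation (so $u$ is smooth on $S$ with the boundary behaviour forced by smoothness, compatible with $x=\e R u$); then the $\e^2$-integrand reduces to $-\frac{(\sqrt H\Sigma)_r}{2R}u^2$. Here I need the hypothesis that the mean curvature of $S$ along the radial outer normal is positive: by the computation reproduced in the ``very likely'' display in the introduction (or directly from \eqref{eqn-CNT-B} at $x=y=0$), positivity of that mean curvature is exactly the statement that $\frac{(\sqrt H\Sigma)_r}{R}>0$ pointwise, hence the integrand is $\le 0$ and strictly negative wherever $u\ne 0$. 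So along this one-parameter family $E(\e Ru,0)=E(0,0)-c\e^2+O(\e^3)$ with $c>0$, i.e.\ $E$ strictly decreases.

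Next I would produce a direction of positive second variation using Lemma \ref{lem-2nd-E-f}: along the family $(x,y)=(-\e Rf\cos\theta,\e Rf\sin\theta)$ the $\e^2$-coefficient is $\frac14\int_0^\pi Kf^2\,d\theta$ with $K$ given by \eqref{eqn-K}. By hypothesis there is $\theta_0$ with $K(\theta_0)>0$; by continuity $K>0$ on a neighbourhood $I$ of $\theta_0$ in $[0,\pi]$, so I choose $f$ to be a nonnegative bump function supported in $I$, not identically zero, and still satisfying whatever boundary/smoothness requirements the admissible class imposes (a bump in the interior avoids the endpoints entirely, so this is harmless). Then $\int_0^\pi Kf^2\,d\theta>0$, and along this family $E=E(0,0)+c'\e^2+O(\e^3)$ with $c'>0$, i.e.\ $E$ strictly increases. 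Having exhibited one admissible direction of strict increase and one of strict decrease from $(0,0)$, and knowing $(0,0)$ is a critical point (it solves \eqref{eqn-critical}), the point is by definition a saddle critical point, completing the proof.

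The main thing to be careful about is the bookkeeping at the two ends of the second variation: verifying that the perturbations $u$ (for the negative direction) and $f$ supported in $I$ (for the positive direction) genuinely lie in the space over which $E(x,y)$ is being varied, i.e.\ that $x=\e R u$, $y=\e\Sigma v$ respect the constraints $x'(0)=x'(\pi)=y(0)=y(\pi)=0$ coming from smoothness of $g$ and of $T$. For the positive direction this is immediate since an interior bump vanishes to infinite order at $0$ and $\pi$; for the negative direction one simply picks $u$ even at the endpoints with $u'(0)=u'(\pi)=0$, which still leaves $u$ free to be nonzero somewhere. A secondary point is to make the identification ``mean curvature along the radial outer normal positive $\iff \frac{(\sqrt H\Sigma)_r}{R}>0$'' explicit, but this is a routine computation of the second fundamental form of $S=\{r=r_0\}$ inside the slice $\{t=t_0\}$ using the metric \eqref{eqn-kerr-like}, and it is already implicit in the formula for $E(0,0)$ quoted in the introduction. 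Neither point is a real obstacle; the proof is essentially a two-line sign argument once the two lemmas are in hand.
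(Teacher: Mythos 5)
Your proposal is correct and follows essentially the same route as the paper: a negative second variation in the pure-$u$ direction from Lemma \ref{lem-2nd-E}, using that positivity of the radial mean curvature $k=\frac{(\sqrt H\Sigma)_r}{\sqrt H R\Sigma}$ forces $(\sqrt H\Sigma)_r>0$, and a positive second variation from Lemma \ref{lem-2nd-E-f} with a bump function $f$ supported where $K>0$. The extra care you take with the endpoint conditions on $u$ and $f$ is a point the paper passes over silently, but it changes nothing in the argument.
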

\begin{proof}
By direct computation, the mean curvature of $S$ along the spacelike radial outer normal $\hat{\mathbf{r}}=(1/R)\partial_{r}$ is
\begin{equation}\label{k}
{k}=-(H^{-1}\vv<\nabla_{\partial_{\varphi}}\partial_{\varphi},\hat{\mathbf{r}}>
+\Sigma^{-2}\vv<\nabla_{\partial_{\theta}}\partial_{\theta},\hat{\mathbf{r}}>)=\frac{(\sqrt{H}\Sigma)_{r}}{\sqrt{H}R\Sigma}.
\end{equation}
So ${k}>0$ implies $(\sqrt{H}\Sigma)_{r}>0$. Then, by \eqref{eqn-2nd-E}, we know that
\begin{equation}
\delta^2E_{(0,0)}(R u,0)=-\frac{1}{2}\int_0^\pi\frac{(\sqrt H\Sigma)_r}{ R}u^2d\theta<0
\end{equation}
when $u$ is nonzero.

On the other hand, suppose that $K>0$ in a neighborhood $[a,b]$ of $\theta_0$, let $f$ be smooth function on $[0,\pi]$ such that $f= 1$ in a neighborhood of $\theta_0$ and $f=0$ outside $[a,b]$. Then, by \eqref{eqn-2nd-E-f},
 \begin{equation}
\delta^2E_{(0,0)}(-Rf\cos\theta ,\Sigma f\sin\theta)=\frac{1}{2}\int_0^\pi Kf^2d\theta>0.
\end{equation}
This completes the proof.
\end{proof}
By applying Theorem \ref{thm-saddle} to the Schwarschild and Kerr spacetimes, we have the following conclusion.
\begin{cor}
When the physical spacetime is the Schwarschild or Kerr spacetime, the obvious critical point $(x,y)=(0,0)$ of $E(x,y)$ is a saddle point.
\end{cor}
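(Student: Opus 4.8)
The plan is to verify the two hypotheses of Theorem~\ref{thm-saddle} for the Schwarzschild and Kerr metrics, the latter in Boyer--Lindquist coordinates, which have the Kerr-like form~\eqref{eqn-kerr-like} with
\[
H=\frac{\big[(r^2+a^2)^2-a^2\Delta\sin^2\theta\big]\sin^2\theta}{\Sigma^2},\qquad R^2=\frac{\Sigma^2}{\Delta},\qquad \Sigma^2=r^2+a^2\cos^2\theta,
\]
where $\Delta=r^2-2mr+a^2$ and $a$ is the rotation parameter, Schwarzschild being the case $a=0$. We restrict to the exterior region $r>r_+:=m+\sqrt{m^2-a^2}$, where $\Delta>0$ and $S=\{r=r_0\}$ is spacelike. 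The key simplification is that $\sqrt H\,\Sigma=\sin\theta\sqrt{(r^2+a^2)^2-a^2\Delta\sin^2\theta}$, which is independent of the $\theta$-dependence of $\Sigma^2$ and hence has a clean radial derivative.

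First I would verify that the mean curvature of $S$ along the radial outer normal is positive. By~\eqref{k} this amounts to $(\sqrt H\Sigma)_r>0$; differentiating the formula above, $(\sqrt H\Sigma)_r$ equals $\sin\theta\,[\,4r(r^2+a^2)-2a^2(r-m)\sin^2\theta\,]$ divided by a positive quantity, and since $r>r_+\ge m$ and $\sin^2\theta\le1$ this numerator is bounded below by $4r^3+2a^2r+2a^2m>0$. Next I would verify that $K(\theta_0)>0$ for some $\theta_0$, choosing $\theta_0=\pi/2$. By the reflection symmetry $\theta\mapsto\pi-\theta$ one has $H_\theta=0$ at $\theta=\pi/2$, and the factor $\cos\theta\sin\theta$ kills one more term, so~\eqref{eqn-K} collapses there to $K(\pi/2)=\tfrac12\big(-(\sqrt H\Sigma)_r/R+\Sigma+\sqrt H\big)$. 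Writing $P:=(\sqrt H\Sigma)^2|_{\theta=\pi/2}=r^4+a^2r^2+2a^2mr$, one gets $\Sigma|_{\pi/2}=r$, $\sqrt H|_{\pi/2}=\sqrt P/r$, $R|_{\pi/2}=r/\sqrt\Delta$, and $(\sqrt H\Sigma)_r|_{\pi/2}=(2r^3+a^2r+a^2m)/\sqrt P$, so that $K(\pi/2)>0$ is equivalent to
\[
r^2\sqrt P+P>(2r^3+a^2r+a^2m)\sqrt\Delta .
\]
This I would deduce from two elementary facts: $P-r^2\Delta=2mr(r^2+a^2)\ge0$, hence $\sqrt P\ge r\sqrt\Delta$ and so $r^2\sqrt P\ge r^3\sqrt\Delta$; and $P-r(r^3+a^2r+a^2m)=a^2mr\ge0$ together with $\sqrt\Delta\le r$ (which holds because $2mr>a^2$ when $r>r_+$), hence $P\ge(r^3+a^2r+a^2m)\sqrt\Delta$. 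Adding these gives the inequality, strictly since $2mr>0$. For Schwarzschild it is just $K(\pi/2)=r-\sqrt{r^2-2mr}>0$ for $r>2m$. Both hypotheses of Theorem~\ref{thm-saddle} then hold, and the conclusion follows.

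The only real obstacle will be the Kerr bookkeeping: recording the Boyer--Lindquist components in exactly the shape~\eqref{eqn-kerr-like} so the formulas of Section~3 apply verbatim, confirming $H_\theta|_{\pi/2}=0$, and pushing the inequality for $K(\pi/2)$ through uniformly in $r$ over the whole exterior rather than only for large $r$. None of this is deep; evaluating at the equatorial plane $\theta=\pi/2$ is precisely what keeps the otherwise bulky Kerr expressions under control.
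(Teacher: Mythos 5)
Your proposal is correct and follows essentially the same route as the paper: verify the two hypotheses of Theorem~\ref{thm-saddle} by checking $(\sqrt H\Sigma)_r>0$ and evaluating $K$ at the equator $\theta_0=\pi/2$, where $H_\theta$ vanishes by reflection symmetry. The only difference is cosmetic: you reduce $K(\pi/2)>0$ to the polynomial inequality $r^2\sqrt P+P>(2r^3+a^2r+a^2m)\sqrt\Delta$ and prove it by two elementary comparisons, whereas the paper reaches the same conclusion through a chain of cruder bounds ending in $r\bigl(1-(1-m^2/r^2)^{1/2}\bigr)>0$; both are valid.
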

\begin{proof} For the Kerr spacetime,
\begin{equation}
\left\{\begin{array}{l}H\Sigma^2=\sin^2\theta((r^2+a^2)^2-\Delta a^2\sin^2\theta)\\
 R^2\Delta=\Sigma^2=r^2+a^2\cos^2\theta\\
 \Delta=r^2-2mr+a^2
\end{array}\right.
\end{equation}
with $0<a\leq m\leq r$. The Schwarschild spacetime corresponds to $a=0$ and $m>0$.

It is not hard to check that $(\sqrt H\Sigma)_r>0$ for Schwarschild and Kerr spacetimes. On the other hand, by direct computation, we have
\begin{equation}
H_\theta\left(\frac{\pi}{2}\right)=0
\end{equation}
and
\begin{equation}
\begin{split}
&\Sigma(\pi/2)=r,\ \sqrt H(\pi/2)=r\left(1+\frac{a^2}{r^2}+\frac{2ma^2}{r^3}\right)^{1/2},\\
&R(\pi/2)=\left(1-\frac{2m}{r}+\frac{a^2}{r^2}\right)^{-1/2}.
\end{split}
\end{equation}
Substituting these into \eqref{eqn-K}, we have
\begin{equation}
\begin{split}
&K(\pi/2)\\
\geq&r\left(-\left(1+\frac{a^2}{r^2}+\frac{2ma^2}{r^3}\right)^{1/2}\left(1-\frac{2m}{r}+\frac{a^2}{r^2}\right)^{1/2}+\frac{1+\left(1+\frac{a^2}{r^2}+\frac{2ma^2}{r^3}\right)^{1/2}}{2}\right)\\
\geq &r\left(-\left(1+\frac{a^2}{r^2}+\frac{2m}{r}\right)^{1/2}\left(1-\frac{2m}{r}+\frac{a^2}{r^2}\right)^{1/2}+1\right)\\
=&r\left(-\left(\left(1+\frac{a^2}{r^2}\right)^2-\frac{4m^2}{r^2}\right)^{1/2}+1\right)\\
\geq&r\left(-\left(1-\frac{m^2}{r^2}\right)^{1/2}+1\right)\\
>&0
\end{split}
\end{equation}
when $m>0$. Then, the conclusion follows by Theorem \ref{thm-saddle}.
\end{proof}
For the Minkowski spacetime, since $K\equiv 0$, we have to deal with it independently.
\begin{cor}
When the physical spacetime is the Minkowski spacetime, the obvious critical point $(x,y)=(0,0)$ of $E(x,y)$ is a saddle point.
\end{cor}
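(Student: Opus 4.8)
The plan is to exploit the fact that in the Minkowski case the integrand $K$ vanishes identically, so that the second-order term along the curve $(-\e Rf\cos\theta,\e Rf\sin\theta)$ is degenerate; I therefore need to look at the next non-trivial direction rather than a higher-order expansion along the same curve. Concretely, the strategy is: (i) reconfirm via Lemma \ref{lem-2nd-E} that $\delta^2E_{(0,0)}(Ru,0)=-\frac12\int_0^\pi\frac{(\sqrt H\Sigma)_r}{R}u^2\,d\theta<0$ for any nonzero $u$, which already uses only that the radial mean curvature of $S$ is positive; for flat $\R^{1,3}$ with $S$ a round sphere of radius $r_0$ we have $H=\Sigma^2=r_0^2$ (up to the obvious angular factor $\sin^2\theta$ in $H$), $R=1$, so $(\sqrt H\Sigma)_r>0$ and this half of the saddle claim is immediate. (ii) For the positive direction, I would \emph{not} use the family $u=-f\cos\theta$, $v=f\sin\theta$ (which gives $K\equiv0$), but instead return to the unreduced quadratic form in Lemma \ref{lem-2nd-E},
\[
\delta^2E_{(0,0)}(Ru,\Sigma v)=\frac12\int_0^\pi\left(-\frac{(\sqrt H\Sigma)_r}{2R}(u^2+v^2)+\frac{\Sigma\sqrt{1-\frac{H_\theta^2}{4H\Sigma^2}}}{2}v^2+\sqrt H u_\theta v\right)d\theta,
\]
and choose $u\equiv 0$, $v=v(\theta)$ a bump supported away from the poles. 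Then the form reduces to $\frac12\int_0^\pi\left(-\frac{(\sqrt H\Sigma)_r}{2R}+\frac{\Sigma}{2}\sqrt{1-\frac{H_\theta^2}{4H\Sigma^2}}\right)v^2\,d\theta$, and for Minkowski with $H=r_0^2\sin^2\theta$, $\Sigma=r_0$, $R=1$ one computes $H_\theta^2/(4H\Sigma^2)=\cos^2\theta/(\sin^2\theta\cdot r_0^2)\cdot\ldots$; more simply, $(\sqrt H\Sigma)_r=(\,r_0^2\sin\theta\,)_r=2r_0\sin\theta$ while $\Sigma\sqrt{1-H_\theta^2/(4H\Sigma^2)}=r_0\sqrt{1-\cot^2\theta/r_0^2\cdot r_0^2}$... — here the point is that near $\theta=\pi/2$ the subtracted term $H_\theta^2/(4H\Sigma^2)\to0$, so the coefficient of $v^2$ tends to $-\frac{r_0}{2}+\frac{r_0}{2}\cdot 1$ plus lower-order positive contributions; I expect after an honest substitution of the flat data it is strictly positive on a neighborhood of $\theta=\pi/2$.

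Thus the heart of the argument is a direct substitution of the explicit Minkowski/round-sphere metric data into the $v^2$-coefficient of Lemma \ref{lem-2nd-E} (equivalently, the $a=m=0$ specialization of the Kerr formulas already recorded, which gives $\Sigma(\pi/2)=r$, $\sqrt H(\pi/2)=r$, $R(\pi/2)=1$, $H_\theta(\pi/2)=0$), and checking that
\[
-\frac{(\sqrt H\Sigma)_r}{2R}+\frac{\Sigma}{2}\sqrt{1-\frac{H_\theta^2}{4H\Sigma^2}}
\]
is positive at (hence near) $\theta=\pi/2$: with the flat data $(\sqrt H\Sigma)_r/R = 2r$ at $\theta=\pi/2$, this coefficient equals $-r+\tfrac r2 = -\tfrac r2<0$?? — if so, then one instead keeps the full form and optimizes over the relative size of $u_\theta$ and $v$; i.e. the genuinely correct move is to take $v$ concentrated and $u$ chosen so that $\int\sqrt H u_\theta v$ overwhelms the negative definite $-\frac{(\sqrt H\Sigma)_r}{4R}(u^2+v^2)$ term. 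Scaling $u$ large relative to $v$ does not help (the $u^2$ term is negative), but scaling $u$ to oscillate rapidly makes $u_\theta$ large while keeping $\|u\|$ bounded: take $v$ a fixed bump near $\theta=\pi/2$ and $u=\lambda^{-1}w(\lambda\theta)$-type so that $u_\theta$ is $O(1)$ but one can tune phase to make $\int\sqrt H u_\theta v$ a fixed positive number while $\int u^2=O(\lambda^{-2})\to0$; then $\delta^2E_{(0,0)}(Ru,\Sigma v)>0$ for $\lambda$ large. This produces the required positive direction and, together with step (i), shows $(0,0)$ is a saddle.

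The main obstacle is getting the sign bookkeeping right in the cross term $\sqrt H u_\theta v$: because it is the \emph{only} indefinite term, the entire saddle phenomenon for Minkowski hinges on showing that this term is not dominated by the negative-definite part $-\frac{(\sqrt H\Sigma)_r}{4R}(u^2+v^2)$ for a suitable choice of $(u,v)$. I would settle this by the oscillation/rescaling trick above: for fixed $v$ supported where $\sqrt H>0$, choose $u_\varepsilon$ with $\|u_\varepsilon\|_{L^2}\to 0$ but $\int_0^\pi\sqrt H\,u_{\varepsilon,\theta}\,v\,d\theta\to c>0$ (possible since $v\mapsto -(\sqrt H v)_\theta$ is a nonzero functional on $H^1$, so its $L^2$-pairing with the derivatives of a bounded-energy family can be made to have a definite sign and size). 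Once that is in place, the inequality $\delta^2E_{(0,0)}(Ru_\varepsilon,\Sigma v)>0$ for small $\varepsilon$ is immediate, and combined with the strictly negative direction $(Ru,0)$ we conclude that $(x,y)=(0,0)$ is a saddle critical point of $E(x,y)$ for the Minkowski spacetime.
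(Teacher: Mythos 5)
Your negative direction (step (i), taking $v=0$ and $u\neq 0$) is exactly the paper's and is correct, and you are also right that the two obvious one--parameter families fail for Minkowski: the family of Lemma \ref{lem-2nd-E-f} is degenerate because $K\equiv 0$, and the pure--$v$ direction has coefficient $-\frac{(\sqrt H\Sigma)_r}{2R}+\frac{\Sigma}{2}\sqrt{1-\frac{H_\theta^2}{4H\Sigma^2}}=-r\sin\theta+\frac{r}{2}\sin\theta<0$. The genuine gap is your final mechanism for producing a positive direction. The oscillation/rescaling trick cannot work: integrating by parts,
\begin{equation*}
\int_0^\pi \sqrt H\,u_\theta\,v\,d\theta=-\int_0^\pi u\,(\sqrt H\,v)_\theta\,d\theta
\end{equation*}
(the boundary terms vanish since $\sqrt H=r\sin\theta\to 0$ at the poles), so for fixed $v$ the cross term is a \emph{bounded} linear functional of $u$ on $L^2$, with norm $\|(\sqrt H v)_\theta\|_{L^2}$. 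Along any family with $\|u_\varepsilon\|_{L^2}\to 0$ it therefore tends to $0$ and cannot converge to a positive constant $c$; equivalently, a rapidly oscillating $u_\theta$ makes $\int\sqrt H\,u_\theta v\,d\theta$ average out (Riemann--Lebesgue), not stay bounded below. So the claimed inequality $\delta^2E_{(0,0)}(Ru_\varepsilon,\Sigma v)>0$ is never reached by that construction.

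What does work --- and is what the paper does --- is to take $u$ and $v$ of \emph{comparable} size, correlated so that the cross term beats the negative--definite part. After the integration by parts above, for fixed $v$ the form is pointwise concave quadratic in $u$; optimizing gives $u=-\frac{(v\sin\theta)_\theta}{2\sin\theta}$ and the value
\begin{equation*}
\frac{r}{2}\int_0^\pi\left(\frac{\left((v\sin\theta)_\theta\right)^2}{4\sin\theta}-\frac{1}{2}v^2\sin\theta\right)d\theta,
\end{equation*}
which is strictly positive already for $v=\sin\theta\cos\theta$. The paper short--circuits this by exhibiting the explicit pair $u=-\tfrac{2}{3}\cos^2\theta$, $v=\sin\theta\cos\theta$ and computing $\delta^2E_{(0,0)}\left(-\tfrac{2}{3}\cos^2\theta,\,r\sin\theta\cos\theta\right)=\tfrac{r}{45}>0$. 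Replace your limiting construction with such an explicit (or variationally optimized) choice and the argument closes.
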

\begin{proof}
For the Minkowski spacetime, we have $H=r^2\sin^2\theta$, $\Sigma=r$ and $R=1$. By Lemma \ref{lem-2nd-E},
\begin{equation}
\delta^2{E}_{(0,0)}(u,rv)=\frac{r}{2}\int_0^\pi \left(-u^2-\frac{1}{2}v^2+u_\theta v\right)\sin\theta d\theta.
\end{equation}
It is clear that
\begin{equation}
\delta^2E_{(0,0)}(1,0)<0.
\end{equation}
On the other hand, by direct computation,
\begin{equation}
\delta^2E_{(0,0)}\left(-\frac{2}{3}\cos^2\theta,r\sin\theta\cos\theta\right)=\frac{r}{45}>0.
\end{equation}
This completes the proof.
\end{proof}

\section{Appendix: The choice $N=N_{0}$ and critical equation}
Consider the displacement $N$ is $N_{0}$:
\begin{equation}
N=N_{0}=\sqrt{1+\parallel\nabla\tau\parallel^2}e_{0}-\nabla\tau
\end{equation}
where $N^{\top}=-\nabla\tau$ is tangent to the $S$ with metric $\sigma=\Sigma^2d\theta^2+Hd\phi^2$, so that
\begin{equation}
-\nabla\tau=-\frac{\tau_{\theta}}{\Sigma^2}\partial_{\theta}-\frac{\tau_{\varphi}}{H}\partial_{\varphi}=-\frac{\tau_{\theta}}{\Sigma^2}\partial_{\theta}
\end{equation}
for the choice $\tau$ depends only on $\theta$ as the previous setting $y=\tau_{\theta}$. The Laplacian of $\tau$ w.r.t.\ $\sigma$ is
\begin{equation}
\begin{split}
\Delta\tau&=\sigma^{AB}\nabla_{A}\nabla_{B}\tau=\sigma^{AB}(\partial_{A}\partial_{B}\tau-\Gamma^{C}{}_{AB}\partial_{C}\tau)\nonumber\\
&=\frac{1}{\Sigma^{2}}\tau_{\theta\theta}+\frac{\Sigma H_{\theta}-2H\Sigma_{\theta}}{2H\Sigma^3}\tau_{\theta}\nonumber\\
&=\frac{y_{\theta}}{\Sigma^2}+\frac{\Sigma H_{\theta}-2H\Sigma_{\theta}}{2H\Sigma^3}y.
\end{split}
\end{equation}
Recall the displacement vector $N=N^{\mu}\partial_{\mu}$ determined by 4D isometric matching with the components \cite{SCLN2}
\begin{equation}
N^t=\frac{\sqrt{H}\alpha}{\sqrt{-g}},\quad N^r=-\frac{x}{R^2},\quad N^\theta=-\frac{y}{\Sigma^2},\quad N^\varphi=\frac{-G\alpha}{\sqrt{-g}\sqrt{H}},
\end{equation}
read for $N^{\top}=N^{\theta}\partial_{\theta}$. According to the spacelike mean curvature vector of $S$ in Kerr spacetime is $h=-(k/R)\partial_{r}$, from \eqref{eqn-e-0} and \eqref{k} we have
\begin{equation}
\sqrt{1+\Vert\nabla\tau\Vert^2}\langle h,e_{0}\rangle=\langle h,N\rangle=\frac{k}{R}x=-\Delta\tau
\end{equation}
which is the first equation of \eqref{eqn-critical}. The choice $N=N_{0}$ satisfies the critical equation that means the solution $x$ is determined by $y(\theta)$ which is the only free choice for $N=N_{0}$.

\end{document}